\title{A sharp Bogomolov-type bound}
\author[{S. Checcoli}]{Sara Checcoli}
\address{Mathematisches Institut\\
Universität Basel\\
Rheinsprung 21\\
CH-4051 Basel\\
Switzerland}
\email{sara.checcoli@unibas.ch}
\author[{F. Veneziano}]{Francesco Veneziano}
\address{Mathematisches Institut\\
Georg-August Universität Göttingen\\
Bunsenstraße 3-5\\
D-37073 Göttingen\\
Deutschland}
\email{fvenez@uni-math.gwdg.de}
\author[{E. Viada }]{Evelina Viada}
\address{Mathematisches Institut\\
Universität Basel\\
Rheinsprung 21\\
CH-4051 Basel\\
Switzerland}
\email{evelina.viada@unibas.ch}
\thanks{All authors are supported by the Swiss National Science Foundation (SNSF)}
\keywords{Abelian varieties, Essential minimum, Bogomolov}
\subjclass{11G10, 11G50, 11J95}
\renewcommand{\epsilon}{\varepsilon}
\newcommand{\idi}{\mathrm{Id}}
\newcommand{\defalf}{\alpha}
\newcommand{\mat}{{\mathrm{Mat}}}
\newcommand{\moltp}{\binom{N-1}{n-1}}
\newcommand{\abv}{2}
\newcommand{\cnbundle}{{\mathcal{L}_{N}}}
\newcommand{\cbo}{{\mathcal{L}_{n}}}
\newcommand{\chern}{{\rm{c_1}}}
\newcommand{\invf}{{\hat\phi}}
\newcommand{\piuc}{\boxplus}
\newcommand{\ef}{\phi}
\newcommand{\efdual}{\hat{\phi}}
\newcommand{\elle}{{\mathcal{L}}}
\newcommand{\Ii}{\mathbf{I}}
\newcommand{\Mm}{\mathcal{M}}
\newcommand{\qe}{\mathbb{Q}}
\newcommand{\pu}{\mathbb{P}}
\DeclareMathOperator{\codim}{codim}
\DeclareMathOperator{\Hom}{Hom}
\DeclareMathOperator{\stab}{Stab}
\newcommand{\stabv}{\stab Y}
\newtheorem{thm}{Theorem}[section]
\newtheorem{propo}[thm]{Proposition}
\newtheorem{lem}[thm]{Lemma}
\newtheorem{D}[thm]{Definition}
\begin{document}

\begin{abstract}
We prove a sharp lower bound for the essential minimum of a non-translate variety in certain abelian varieties.
This uses and generalises a result of Galateau. Our bound is  a new step in direction of an abelian analogue by David and Philippon of a toric conjecture of Amoroso and David and has applications in the framework of anomalous intersections.
\end{abstract}
\maketitle
\tableofcontents
\section{Introduction}
In this paper, by variety we mean an algebraic variety defined over the algebraic numbers.
Let $A$ be an abelian variety; with a symmetric ample line bundle $\elle$ on $A$ we associate an embedding $i_\elle: A \to \pu_m$ defined by the minimal power of $\elle$ which is very ample. Heights and degrees corresponding
to $\elle$ are computed via this embedding. More precisely, $\hat{h}_\elle$ will denote the
$\elle$-canonical   Néron-Tate height, and the degree $\deg_\elle$ of a subvariety of $A$ is defined as the degree of its image in $\pu_m$ under $i_\elle$.

If $A$ is a product of abelian varieties, we fix on each simple non-isogenous factor a symmetric very ample line bundle. On $A$ we consider the line bundle $\elle$ obtained as the tensor product of the pullbacks of the natural projections of $A$ onto its factors.

A subvariety $Y$ of an abelian variety is \emph{translate} if
it is the union of translates of algebraic subgroups.
We also say that a subvariety $Y\subseteq A$ is \emph{torsion} if it is the union of components of algebraic subgroups. An irreducible $Y$ is called \emph{transverse} (resp. \emph{weak-transverse}) if it is
not contained in any proper translate (resp. in any proper torsion variety).

The torsion is a dense subset of $A$ and, in view of the abelian analogue of Kronecker's theorem, it is exactly the set of points of height 0. This statement motivates further questions about points of small height on $A$ and on its subvarieties.

In the context of the Lehmer problem, effective lower bounds for the height of non-torsion points of $A$ have been studied for example in \cite{SilvermanLowerBoundsDuke}, \cite{MasserSmallValues87}, \cite{davidhindry} and \cite{BakSil}.

One is then led to study the height function on an algebraic subvariety $Y$ of $A$. For instance, by the Manin-Mumford conjecture (proved by Raynaud \cite{RaynaudMM}, \cite{RaynaudMMgen}), the points of height zero are dense in $Y$ if and only if $Y$ is torsion.

More in general, setting \[Y(\theta)=\{x \in Y(\overline\qe) \mid \hat{h}_\elle(x) \leq \theta\}\] and denoting by $\overline{Y(\theta)}$ its Zariski closure, we have the following result, known as the Bogomolov Conjecture, proved by Ullmo and Zhang (see for instance \cite{UllmoBogo}  and \cite{ZhangEquidistribution}).

\begin{thm}[Bogomolov Conjecture] The essential minimum 
$$\mu_\elle(Y)=\sup\{\theta \mid \overline{Y(\theta)}\subsetneq Y\}$$ 
is strictly positive if and only if $Y$ is non-torsion.
\end{thm}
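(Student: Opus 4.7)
The easy implication is immediate: if $Y$ is torsion it is a finite union of torsion translates of abelian subvarieties, each of which carries a Zariski-dense set of torsion (hence height-zero) points, so $Y(0)$ is Zariski-dense in $Y$ and $\mu_\elle(Y)\le 0$. I therefore concentrate on the converse.

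For the non-trivial direction I would follow the arithmetic equidistribution strategy of Szpiro--Ullmo--Zhang. Assume $\mu_\elle(Y)=0$. By definition, for every $n\ge 1$ one can select $x_n\in Y(\overline{\qe})$ with $\hat{h}_\elle(x_n)\le 1/n$ such that the sequence $(x_n)$ eventually avoids every fixed proper subvariety of $Y$; in other words, $(x_n)$ is a \emph{generic}, \emph{small} sequence on $Y$. The equidistribution theorem for subvarieties of abelian varieties then guarantees that the normalised counting measures on the Galois orbits of the $x_n$ converge weakly, at every archimedean place, to the canonical probability measure $\mu_Y$ on $Y(\co)$.

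The core of the argument is to compare this canonical measure with the push-forward of a product measure under a suitable morphism. After replacing $A$ by the quotient of $A$ by the identity component of $\stabv$, one may assume $\stabv$ is finite; in this situation, setting $N=\dim Y$, the difference morphism
\[
\alpha\colon Y^{N+1}\longrightarrow A^N,\qquad (y_0,y_1,\dots,y_N)\longmapsto (y_1-y_0,\dots,y_N-y_0)
\]
is generically finite onto its image. Taking $N+1$ independent copies of the small generic sequence above yields a generic small sequence in $Y^{N+1}$, whose image under $\alpha$ is again generic and small in $\alpha(Y^{N+1})$. Applying equidistribution simultaneously to $Y^{N+1}$ and to $\alpha(Y^{N+1})$ identifies $\alpha_\ast\mu_Y^{\otimes(N+1)}$ with the canonical measure on the image. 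Comparing the two sides produces enough rigidity (the push-forward inherits translation invariances from $Y-Y$ that the canonical measure of the image cannot accommodate) to force $Y$ to be a translate of an abelian subvariety; but any such translate containing a Zariski-dense set of height-zero points must itself be a torsion translate, contradicting the non-torsion hypothesis.

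The main obstacle is the equidistribution theorem itself, which is a deep arithmetic-analytic input: it rests on Zhang's arithmetic Hilbert--Samuel theorem, an induction formula for heights of subvarieties, and a careful analysis of limiting measures in Arakelov geometry. Granting equidistribution, the remaining steps---the reduction to finite stabiliser and the measure-theoretic comparison between the two limit measures---are conceptually clean, though technically non-trivial, and consist mainly of geometric bookkeeping around the difference map.
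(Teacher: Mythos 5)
The paper does not prove this theorem; it states it as the Bogomolov Conjecture, ``proved by Ullmo and Zhang,'' and cites \cite{UllmoBogo} and \cite{ZhangEquidistribution}. There is therefore no proof in the paper to compare your attempt against. That said, your sketch faithfully summarises the Szpiro--Ullmo--Zhang equidistribution strategy used in those references: the easy direction via density of torsion points, the reduction to finite stabiliser, the selection of a generic small sequence, the arithmetic equidistribution theorem, and the comparison of the canonical measure with its pushforward under a difference morphism $Y^{m}\to A^{m-1}$ for $m>\dim Y$. These are the right ingredients in the right order. The one place where you are vague, and would need to be careful in a complete write-up, is the final contradiction: the precise mechanism by which the comparison of the two limiting measures forces $Y$ to be a translate is subtle in Zhang's argument (it runs through the generic finiteness of the difference map and the structure of the limit measure near the diagonal, not through a direct appeal to translation-invariances inherited from $Y-Y$). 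But since the paper uses the Bogomolov Conjecture purely as a quoted black box, the correct treatment of this statement here is a citation, which is exactly what the authors give; re-deriving the Ullmo--Zhang theorem is not something the paper attempts or needs.
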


\bigskip

The problem of giving explicit bounds for $\mu_\elle(Y)$ in terms of geometrical invariants of $Y$ and of the ambient variety has been studied in several deep works, for instance in \cite{DavidPhilipponTiruchirapalli}, \cite{sipacommentari}, \cite{sipaIMRP} in the abelian case and in \cite{BZ}, \cite{SchmidtHeightsPoints}, \cite{fra}, \cite{iofrancesco} in the corresponding toric case.

An effective Bogomolov Conjecture (up to $\eta$) for transverse varieties in an abelian variety $A$  states the following.  

{\it Effective Bogomolov Bound } : For any abelian variety and for any real $\eta>0$ there exists a positive effective constant $C(A,\elle,\eta)$ such that for every transverse subvariety $Y$ of $A$
\begin{equation}\label{galkind}\mu_{\elle}(Y)\geq {C(A,\elle, \eta)}\frac{1}{\left({\deg_{\elle}Y}\right)^{\frac{1}{\codim Y}+\eta}}.\end{equation}

Galateau, in \cite{galateau}, proves this   lower bound
for  varieties transverse in an  abelian variety with  a positive density of ordinary primes (Hypothesis H in \cite{galateau}). According to a conjecture of Serre, this shall always be true (see \cite[\S 7]{PinkConjDensity}). Some cases are  proved: CM abelian varieties (see \cite[part~5]{BakSil}), powers of elliptic curves (see \cite[chap.~IV]{SerreDensityEllCurv}) and abelian surfaces (see \cite[part~VI, 2.7]{OgusDensitySurf}) have a positive density of ordinary primes. Thus the bound \eqref{galkind}  holds  in all  such abelian varieties. The constant of Galateau is  effective, for instance, for  powers of elliptic curves with the canonical embedding.

In this paper we prove a strong result for non-translates, in the following sense.
For an irreducible variety
$Y$, we can consider the
minimal translate $H$ which contains $Y$.
 It is then natural to give the following definition.
\begin{D}
An irreducible variety $Y$  is \emph{transverse  in a translate $H$} if $Y\subseteq H$  is not contained in any translate strictly contained in $H$.
The relative codimension of $Y$ is then  the codimension of $Y$ in $H$ denoted $\codim_H Y$.
\end{D}

In our main theorem, we prove that an effective Bogomolov  bound like in \eqref{galkind} implies an $H$-effective Bogomolov   bound,  in which $\deg H$ works in our favour.

\begin{thm}
\label{treapp} Let $A$ be an abelian variety. Assume that an effective Bogomolov bound  \eqref{galkind} holds. Then, for every variety $Y$ transverse in a translate $H\subset A$, 
$$\mu_\mathcal{L} (Y) \ge c(A,\elle,\eta)  \frac{
( \deg_\mathcal{L} H)^{\frac{1}{\codim_H Y}- \eta}
}{
(\deg_\mathcal{L} Y)^{\frac{1}{\codim_H Y}+ \eta}
},$$
where  $c(A,\elle,\eta)$ is a positive effective constant.
 \end{thm}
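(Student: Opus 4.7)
The plan is to reduce to the hypothesis \eqref{galkind} applied inside the abelian subvariety underlying $H$, and then to extract the $\deg_\mathcal{L} H$ factor from the dependence of the resulting constant on that subvariety.

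Write $H = p_0 + B$ with $B \subseteq A$ an abelian subvariety and $p_0 \in H$ chosen to minimize $\hat{h}_\mathcal{L}$ on $H$, so $\hat{h}_\mathcal{L}(p_0) = \mu_\mathcal{L}(H)$; by the parallelogram identity such a $p_0$ is orthogonal to $B$ for the N\'eron--Tate bilinear form, hence $\hat{h}_\mathcal{L}(p_0 + b) = \hat{h}_\mathcal{L}(p_0) + \hat{h}_\mathcal{L}(b) \geq \hat{h}_\mathcal{L}(p_0)$ for all $b \in B$, giving in particular $\mu_\mathcal{L}(Y) \geq \mu_\mathcal{L}(H)$. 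Setting $Y' := Y - p_0 \subseteq B$, transversality of $Y$ in $H$ translates into transversality of $Y'$ in $B$, with $\codim_B Y' = \codim_H Y$. The parallelogram law applied to $y = y' + p_0 \in Y$ yields $\hat{h}_\mathcal{L}(y') \leq 2\hat{h}_\mathcal{L}(y) + 2\mu_\mathcal{L}(H)$; combined with $\mu_\mathcal{L}(Y) \geq \mu_\mathcal{L}(H)$ this produces the key estimate $\mu_\mathcal{L}(Y) \geq \tfrac{1}{4}\mu_{\mathcal{L}|_B}(Y')$, which frees us from controlling the translation by $p_0$ afterwards.

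Now apply \eqref{galkind} to $Y'$ in $B$ with the restricted line bundle $\mathcal{L}|_B$ (Galateau's ordinarity hypothesis is inherited by any abelian subvariety of $A$):
\[
\mu_{\mathcal{L}|_B}(Y') \;\geq\; \frac{C(B,\mathcal{L}|_B,\eta)}{(\deg_{\mathcal{L}|_B} Y')^{1/\codim_B Y' + \eta}}.
\]
Since $\deg_{\mathcal{L}|_B}(Y')$ agrees with $\deg_\mathcal{L}(Y)$ up to a factor depending only on $A$ and $\mathcal{L}$, and $\codim_B Y' = \codim_H Y$, the denominator $(\deg_\mathcal{L} Y)^{1/\codim_H Y + \eta}$ of the theorem is already reproduced.

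The final and main step is to extract the numerator $(\deg_\mathcal{L} H)^{1/\codim_H Y - \eta}$ from the dependence of $C(B, \mathcal{L}|_B, \eta)$ on $B$: since $\deg_{\mathcal{L}|_B}(B)$ agrees with $\deg_\mathcal{L}(H)$ up to a bounded factor, one must show that Galateau's constant, viewed as a function of the ambient abelian variety, scales polynomially in its degree with exponent at least $1/\codim_H Y - \eta$. This is the main obstacle: it requires reinspecting Galateau's auxiliary-polynomial construction at ordinary primes to isolate the precise role of $\deg_{\mathcal{L}|_B}(B)$ in the intersection-theoretic estimates and Frobenius-type bounds underlying his proof. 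Once this quantitative scaling is established, absorbing universal constants into $c(A,\mathcal{L},\eta)$ yields the stated bound.
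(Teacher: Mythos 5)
Your reduction to the abelian-subvariety case (first two paragraphs) is sound in spirit, although the orthogonality argument is more delicate than it needs to be: a height-minimizing algebraic point $p_0\in H(\overline\qe)$ may fail to exist, since the orthogonal projection of a point of $H$ onto the complement of $B\otimes\re$ need not be rational. The paper sidesteps this by simply translating by an arbitrary algebraic point $q\in Y$ of height at most a quarter of the target bound (if none exists there is nothing to prove), and the same factor $1/4$ comes out.

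The real gap is the third paragraph, and it is not a small one. You apply \eqref{galkind} inside $B$ with the bundle $\mathcal{L}|_B$, producing a constant $C(B,\mathcal{L}|_B,\eta)$, and then propose to extract $(\deg_\mathcal{L} H)^{1/\codim_H Y-\eta}$ from the way this constant depends on $B$. But the hypothesis of Theorem~\ref{treapp} only asserts that \eqref{galkind} holds; it gives no control whatsoever on how $C(B,\mathcal{L}|_B,\eta)$ behaves as the ambient abelian variety varies --- a priori it could tend to zero faster than any power of $\deg_\mathcal{L} B$. You acknowledge that establishing the required polynomial scaling ``requires reinspecting Galateau's auxiliary-polynomial construction''; that is not a step you have carried out, and it is in fact the entire content of the theorem. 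Moreover it would defeat the point of the statement, which is designed to be applicable to \emph{any} bound of type \eqref{galkind}, not only Galateau's. The paper avoids this issue entirely by a different mechanism: via Lemma~\ref{mw} and the line-bundle equivalence of Theorem~\ref{relchiave}, a bounded power of $\mathcal{L}|_B$ is realised, through isogenies of bounded degree, as the pullback of a tensor product $\bigotimes_{\Ii}\Phi_{\Ii}^*\Mm$ of bundles on the \emph{fixed} product $A'=A_1^{n_1}\times\dotsb\times A_r^{n_r}$, of which there are only finitely many possibilities for a given $A$. The hypothesis \eqref{galkind} is then invoked only on $A'$ (through Theorem~\ref{thm1}), so the constants involved range over a finite set and can be absorbed into $c(A,\mathcal{L},\eta)$. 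The factor $\deg_\mathcal{L} H$ then emerges not from any conjectured dependence of Galateau's constant on the ambient variety, but from the explicit intersection-theoretic identity of Proposition~\ref{gael} combined with the superadditivity of essential minima under tensor products (Lemma~\ref{minessmin}). This line-bundle comparison is the decisive idea your proposal is missing.
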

 As described in Section \ref{1.3implica1.4}, from the result of Galateau we immediately deduce: 
 \begin{thm}
\label{dueapp} Let $A$ be an abelian variety with a positive density of ordinary primes. Let  $Y$ be a variety transverse in a translate $H\subset A$. Then, for any
$\eta>0$, there exists a positive   constant $c'(A,\elle,\eta)$ such that 
$$\mu_\mathcal{L} (Y) \ge c'(A,\elle,\eta)  \frac{
( \deg_\mathcal{L} H)^{\frac{1}{\codim_H Y}- \eta}
}{
(\deg_\mathcal{L} Y)^{\frac{1}{\codim_H Y}+ \eta}
}.$$
 \end{thm}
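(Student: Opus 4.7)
The plan is to derive Theorem \ref{dueapp} as an immediate consequence of the conditional statement Theorem \ref{treapp}, combined with Galateau's main theorem from \cite{galateau}. Theorem \ref{treapp} says, tautologically speaking, that \emph{whenever} the effective Bogomolov bound \eqref{galkind} is available for transverse subvarieties of $A$, the stronger $H$-effective version appearing in the conclusion of Theorem \ref{dueapp} holds automatically with the same constant (up to renaming). So the only task is to verify that the hypothesis of Theorem \ref{treapp} is indeed fulfilled under the assumption that $A$ has a positive density of ordinary primes.

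First I would invoke Galateau's theorem in \cite{galateau}: his Hypothesis H is precisely that $A$ has a positive density of ordinary primes, and under it he proves that for every $\eta>0$ there exists a positive, effective constant $C(A,\elle,\eta)$ with
$$\mu_\elle(Y')\;\ge\;C(A,\elle,\eta)\,\frac{1}{(\deg_\elle Y')^{\frac{1}{\codim Y'}+\eta}}$$
for every transverse subvariety $Y'\subseteq A$. This is exactly \eqref{galkind} on $A$, so the hypothesis of Theorem \ref{treapp} is met.

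Next I would apply Theorem \ref{treapp} to the given $Y$ transverse in the translate $H\subseteq A$. Its conclusion reads
$$\mu_\mathcal{L}(Y)\;\ge\;c(A,\elle,\eta)\,\frac{(\deg_\mathcal{L} H)^{\frac{1}{\codim_H Y}-\eta}}{(\deg_\mathcal{L} Y)^{\frac{1}{\codim_H Y}+\eta}},$$
so setting $c'(A,\elle,\eta):=c(A,\elle,\eta)$ gives exactly the bound claimed in Theorem \ref{dueapp}.

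There is essentially no obstacle at this stage: all the genuine content has been packaged into Theorem \ref{treapp} on one side and into Galateau's theorem on the other, and the deduction is a matter of matching hypotheses. The actual difficulty—outside the scope of the present deduction—lies in the proof of Theorem \ref{treapp}, where one must pass from an ambient bound controlled by $\codim_A Y$ to a \emph{relative} bound controlled by $\codim_H Y$, while simultaneously extracting a favourable positive power of $\deg_\mathcal{L} H$ in the numerator.
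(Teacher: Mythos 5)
Your deduction is precisely the paper's own route: in Section~\ref{1.3implica1.4} the authors recall Galateau's Theorem~1.1, note that it yields the effective Bogomolov bound \eqref{galkind} under the positive-density hypothesis, and then read off Theorem~\ref{dueapp} from Theorem~\ref{treapp}. The one small point the paper makes explicit that you omit: since the proof of Theorem~\ref{treapp} (through Theorems~\ref{canoni} and~\ref{thm1}) ends up applying \eqref{galkind} to a variety isogenous to the abelian subvariety underlying $H$ rather than to $A$ itself, one should observe that having a positive density of ordinary primes is inherited by abelian subvarieties of $A$, so that Galateau's bound is indeed available where it is actually used.
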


\bigskip

In our bound, as in the effective Bogomolov bound,  the degree of $Y$ appears in the denominator, but here the degree of $H$ appears at the numerator. This last dependence is crucial for applications in the context of torsion anomalous intersections and of the Zilber-Pink conjecture, enabling a significant simplification in some classical uses of such Bogomolov-type bounds. For instance, using a special case of Theorem \ref{dueapp}, we obtain some new results. In particular, in \cite{TAI}  we show that: If $V$ is a weak-transverse variety in a product of elliptic curves with CM, then the $V$-torsion anomalous varieties of relative codimension one are finitely many.  In addition, their degree and normalised height are effectively bounded; see \cite[Theorem~1.3]{TAI} for a precise statement.

\bigskip

Our theorems are analogues, up to the logarithmic correction factor, of a toric conjecture of Amoroso and David in \cite{fra}; see \cite{sipaIMRP} for a suitable conjecture in the abelian case.  In the context of the Lehmer problem, a lower bound of similar nature to  ours for translates of tori is given by  Sombra and Philippon in \cite{SombraPatrice} and 
for points in CM abelian varieties is given by Carrizosa in \cite{carrizosaIMRN}. 

\bigskip

The main point in our proof is to compare the line bundle $\elle$ on $A$ with its restriction $\elle|_H$ to $H$.
To overcome this difficulty we prove an equivalence of line bundles which enables us to describe the restriction of the line bundle to $H$ in terms of pull-backs  through several isogenies.
From the bound \eqref{galkind}, we immediately deduce the good behaviour of essential minima under isogenies.
We notice that our constant depends explicitly on the constant in \eqref{galkind}. In addition, our method applies to any lower bound of the kind \eqref{galkind} where the hypothesis on $Y$  are preserved by isogenies.
 We finally remark that the hypothesis of irreducibility for $Y$ is not restrictive, indeed the essential minimum of a reducible variety is the maximum of the essential minima of its components.

\medskip

This paper is structured in the following way.
In Section \ref{prelimapp}, we give some preliminaries. Then, as a straightforward consequence of bound \eqref{galkind}, we show the good behaviour of the essential minimum under the action of an isogeny. 

In Section \ref{equivalencesection} we present the main technical ingredients of the proof, which is an equivalence of line bundles.

In Section \ref{sei} we finally prove our main theorem in two steps:
we first consider the case when $H$ is an abelian subvariety, and then we reduce to this case the general case of $H$ a translate.

\section{Preliminaries}\label{prelimapp}
Let $A$ be an abelian variety.
We consider irreducible subvarieties  $Y$ of $A$ and we shall analyse the sets of points of small height on $Y$ with respect to different line bundles; for this reason, when talking about heights, degrees and essential minima, we  always indicate which line bundle we are using.

An abelian variety is isogenous to a product of simple abelian varieties.  We suppose 
\begin{equation}\label{decompA}
A=A_1^{N_1}\times\dotsb \times A_r^{N_r},
\end{equation} 
where the $A_i$'s are pairwise non-isogenous simple abelian varieties, and we
fix embeddings $A_i \hookrightarrow \mathbb{P}_{m_i}$, given by symmetric ample line bundles on $A_i$. On any product variety we consider the line bundle $\elle$ obtained as the tensor product of the pullbacks of the natural projections on its factors.

If $A$ is as in \eqref{decompA} and $B$ is an abelian subvariety  of $A$, then \[B=B_1\times\dotsb \times B_r,\] where for every $i$, $B_i$ is an abelian subvariety of $A_i^{N_i}$; this follows, for instance, from \cite[Lemma 7, p.~262]{MWZeroEst}  because $A_i^{N_i}$ and $A_j^{N_j}$ do not have any non-trivial isomorphic subquotients, when $i\neq j$.

 We denote by $\mathrm{End}(-)$ the ring of endomorphisms of abelian varieties. 
Recall that $\mathrm{End}(A_i)\otimes_{\mathbb{Z}}\mathbb{R}$ is a real, complex or quaternionic algebra, and in each of these cases we have a standard euclidean norm.

We first prove several results which hold for a power of a simple abelian variety. Then we extend them to a general abelian variety.

In the following, we write $\ll$ and $\gg$ to indicate inequalities up to constants depending only on the ambient variety, the fixed line bundles and a positive real number $\eta$, but not on the variety $Y$ or the isogenies involved, which may vary.

\subsection{Basic relations of degrees}
We  recall some basic properties of degrees of subvarieties. In this section, we assume that $A$ is an abelian variety of dimension $D$.

\medskip

Let  $m$ be a positive integer. Let $\elle$ be a symmetric ample line bundle on $A$ and let $\elle^{m}$ be the tensor product of  $m$ copies of $\elle$. Let  $Y$ be an irreducible algebraic subvariety of $A$ of dimension $d$. Then,
\begin{equation}
\label{gradino1ii}\deg_{\elle^{m}}Y=m^{ d}\deg_\elle Y.\end{equation}

\medskip

We are interested in how the degree changes under the action of the multiplication morphism.
For  $a \in \mathbb{Z}$, we denote by $|a|$ its absolute value and by $[a]$ the multiplication by $a$ on $A$. 

From \cite[Corollary~3, p.~59]{MumfAV}, we have
\begin{equation*}
[a]^{*}\elle=\elle^{a^2}.
\end{equation*}
Hindry \cite[Lemma 6]{Hin} proves
\begin{equation*}
\label{hininv}\deg_\elle [a]^{-1}Y=|a|^{2(D-d)}\deg_\elle  Y
\end{equation*}
and
\begin{equation}
\label{hinm} \deg_\elle  [a]Y=\frac{|a|^{\abv d}}{|\stabv\cap \ker[a]|}\deg_\elle Y.\end{equation}

Let $\phi:A \to A$ be an isogeny; then the Projection Formula gives
\begin{equation}\label{projectionformula}\deg_{\ef^*\elle}Y=\deg_\elle \ef_*(Y),
\end{equation}
where $\ef_*(Y)$ is the cycle  with support $\ef(Y)$ and multiplicity $\deg (\ef_{|Y})$.

Furthermore, by \cite[Corollary 3.6.6]{l-b} we have
 $$\deg_{\phi^*\mathcal{L}}A=|\ker \phi |\deg_{\mathcal{L}}A,$$
and more in general: 
 \begin{lem}
\label{gradino}
 Let $\phi:A \to A $ be an isogeny.
 Let $Y$ be an irreducible algebraic subvariety of $A$. Then
 $$\deg_\elle \phi_*(Y)=|\stabv\cap \ker \phi|\deg_\elle \phi(Y).$$
\end{lem}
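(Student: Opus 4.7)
The plan is to unwind definitions. By definition of the pushforward cycle, $\phi_{*}(Y)$ has support $\phi(Y)$ with multiplicity equal to the generic degree $\deg(\phi|_Y)$ of the restriction, so
$$\deg_\elle \phi_*(Y) = \deg(\phi|_Y)\cdot\deg_\elle \phi(Y).$$
Therefore the lemma reduces to the identity $\deg(\phi|_Y)=|\stabv\cap\ker\phi|$, and the task becomes purely set-theoretic: count the size of a generic fiber of $\phi|_Y\colon Y\to\phi(Y)$, which is a finite surjective morphism because $\phi$ is an isogeny.

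To perform this count I would pick a generic $y\in Y$; since $\phi$ is a group homomorphism, the fiber $(\phi|_Y)^{-1}(\phi(y))$ consists of the points $y+k$ with $k\in\ker\phi$ for which $y+k\in Y$, equivalently $y\in Y\cap(Y-k)$. So the fiber has cardinality equal to the number of $k\in\ker\phi$ such that $y\in Y\cap(Y-k)$.

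Next I would split the sum according to whether $k\in\stabv$. If $k\in\stabv$, then $Y-k=Y$, so the condition $y\in Y\cap(Y-k)$ holds for every $y\in Y$ and this $k$ always contributes. If $k\notin\stabv$, then $Y-k$ is a distinct irreducible subvariety of the same dimension as $Y$, so $Y\cap(Y-k)$ is a proper closed subset of $Y$. Since $\ker\phi$ is finite, a generic $y\in Y$ avoids the finite union of these proper closed subsets, and only the $k\in\stabv\cap\ker\phi$ contribute to its fiber. This yields $\deg(\phi|_Y)=|\stabv\cap\ker\phi|$ and proves the lemma.

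No genuine obstacle is expected here: the argument rests only on $\phi$ being a finite group homomorphism and on the standard fact that the generic degree equals the cardinality of a generic fiber. The only technical care is in ruling out the finitely many translates $Y-k$ with $k\notin\stabv$, which is immediate from irreducibility of $Y$ and finiteness of $\ker\phi$.
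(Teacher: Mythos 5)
Your proof is correct. The paper states Lemma \ref{gradino} without proof, taking it as a standard fact (it is the direct generalisation, from $[a]$ to an arbitrary isogeny $\phi$, of Hindry's Lemma~6 cited for formula \eqref{hinm}); your argument — peel off the pushforward multiplicity $\deg(\phi|_Y)$ via the definition given just before relation \eqref{projectionformula}, then count the generic fibre of $\phi|_Y$ by observing that $y+k\in Y$ iff $y\in Y-k$, which for $k\in\ker\phi\setminus\stabv$ confines $y$ to the proper closed subset $Y\cap(Y-k)$ — is exactly the expected one and uses irreducibility and the finiteness of $\ker\phi$ in the right places. The only implicit point worth keeping in mind is that over $\overline\qe$ (characteristic zero) the function-field degree $\deg(\phi|_Y)$ does coincide with the cardinality of a generic fibre, which your count computes.
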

We also have: \begin{lem}
\label{ordinestab}
Let $\phi:A \to A $ be an isogeny. Then,
\begin{enumerate}[label=\roman{*}., ref=(\roman{*})]
\item\label{ordinestabi} $\stab \phi^{-1}(Y)=\phi^{-1}(\stabv)$.
\item\label{ordinestabii} Let $\hat\phi$ be an isogeny such that  $\hat\phi \phi=\phi \hat\phi=[a]$. Then  $$|\stab \hat\phi^{-1} (Y) \cap \ker[a]|=|\ker\hat\phi| |\stabv \cap \ker \phi|.$$
\end{enumerate}
\end{lem}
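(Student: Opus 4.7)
The plan is to prove (i) and (ii) as essentially formal consequences of the fact that $\phi$ and $\hat\phi$ are isogenies, hence surjective group homomorphisms with finite kernel.

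For (i), I would argue by double inclusion, starting from the definition $\stab \phi^{-1}(Y) = \{x\in A : x+\phi^{-1}(Y)=\phi^{-1}(Y)\}$. If $x$ belongs to this stabiliser, apply $\phi$ to both sides of the equality $x+\phi^{-1}(Y)=\phi^{-1}(Y)$: additivity of $\phi$ gives $\phi(x)+\phi(\phi^{-1}(Y))=\phi(\phi^{-1}(Y))$, and surjectivity of $\phi$ yields $\phi(\phi^{-1}(Y))=Y$, whence $\phi(x)+Y=Y$, i.e.\ $x\in \phi^{-1}(\stab Y)$. Conversely, if $\phi(x)\in\stab Y$ and $z\in\phi^{-1}(Y)$, then $\phi(x+z)=\phi(x)+\phi(z)\in\phi(x)+Y=Y$, so $x+z\in\phi^{-1}(Y)$; since $\stab Y$ is a subgroup we may repeat the argument with $-x$ in place of $x$ to obtain the reverse inclusion, proving $x\in\stab \phi^{-1}(Y)$.

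For (ii), I would first invoke (i) to rewrite $\stab \hat\phi^{-1}(Y) = \hat\phi^{-1}(\stab Y)$. The relation $[a]=\phi\circ\hat\phi$ shows that $x\in\ker[a]$ if and only if $\hat\phi(x)\in\ker\phi$, so
$$\stab \hat\phi^{-1}(Y)\cap\ker[a] \;=\; \hat\phi^{-1}(\stab Y)\cap\hat\phi^{-1}(\ker\phi) \;=\; \hat\phi^{-1}\bigl(\stab Y\cap\ker\phi\bigr).$$
The set $\stab Y\cap\ker\phi$ is finite, being contained in the finite group $\ker\phi$, and every fibre of the surjective homomorphism $\hat\phi$ has exactly $|\ker\hat\phi|$ elements. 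Passing to cardinalities gives the claimed equality $|\ker\hat\phi|\,|\stab Y\cap\ker\phi|$.

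I do not anticipate any real obstacle here: both assertions amount to bookkeeping with the group law on $A$. The only delicate point is the use of surjectivity of $\phi$ in (i) to secure the honest equality $\phi(\phi^{-1}(Y))=Y$ rather than a mere inclusion, and this is automatic for an isogeny.
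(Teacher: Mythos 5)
Your proof is correct and follows essentially the same route as the paper's: establish (i) by double inclusion using the surjectivity of the isogeny, then derive (ii) by applying (i) to $\hat\phi$, using $\ker[a]=\hat\phi^{-1}(\ker\phi)$ to write the intersection as $\hat\phi^{-1}(\stab Y\cap\ker\phi)$, and counting fibres. You are slightly more explicit than the paper at two points (invoking the subgroup structure of $\stab Y$ to close the converse inclusion in (i), and spelling out the fibre-counting step in (ii)), but these are exactly the steps the paper's compressed proof leaves implicit.
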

\begin{proof}
Part \ref{ordinestabi}: Let $t\in \stab \phi^{-1}(Y)$ then $$ \phi^{-1}(Y)+t = \phi^{-1}(Y).$$ Taking the image, $Y+\phi(t) = Y$ and $\phi(t) \in \stab Y$. That gives $t \in \phi^{-1}(\stabv)$. Conversely, let $t \in \stabv$, then $$Y+t= Y$$ and taking the preimage $\phi^{-1}(Y+t) = \phi^{-1}(Y)$. Then $\phi^{-1}(t) \in \stab \phi^{-1}(Y)$.

Part \ref{ordinestabii}: By part \ref{ordinestabi} applied to $\hat\phi$, we have $\stab \hat\phi^{-1} (Y)=\hat\phi^{-1} (\stabv)$. As $\phi \hat\phi=[a]$, $\ker[a]=\hat\phi^{-1}(\ker\phi)$.
Then
\[\stab \hat\phi^{-1} (Y) \cap \ker[a]=\hat\phi^{-1}\left(\stabv \cap \ker \phi \right).\qedhere\]
\end{proof}

\subsection{Basic relations of  essential minima}
We now investigate useful relations for the essential minimum.

Recall that, by definition, for every $x\in A$ and isogeny $\phi:A \to A$, we have $\hat h_{\ef^*\elle}(x) = \hat h_\elle(\phi(x))$; then
\begin{equation}\label{phisaltamu}
\mu_{\phi^*\mathcal{L}}(Y)=\mu_{\mathcal{L}}(\phi(Y)).\end{equation}
In addition \begin{equation}
\label{gradino1iii}\mu_{\elle^{m}} (Y)=m \mu_\elle(Y).\end{equation}
Another easy remark is stated in the following lemma.

\begin{lem}
  \label{minessmin} 
  Let $\elle_1,\elle_2$ be two ample line bundles on $A$. Then for every irreducible subvariety $Y\subseteq A$, 
  $$\mu_{\elle_1 \otimes \elle_2}(Y) \ge \mu_{\elle_1 }(Y)+ \mu_{\elle_2 }(Y).$$
\end{lem}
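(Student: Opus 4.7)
The plan is to exploit the additivity of the Néron-Tate height with respect to the line bundle, namely $\hat{h}_{\elle_1\otimes\elle_2}(x)=\hat{h}_{\elle_1}(x)+\hat{h}_{\elle_2}(x)$, together with the non-negativity of $\hat h_\elle$ for a symmetric ample $\elle$. These two facts reduce the statement to a clean set-theoretic containment on $Y$ which Zariski-closure arguments can handle.

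More concretely, I would fix arbitrary $\theta_1<\mu_{\elle_1}(Y)$ and $\theta_2<\mu_{\elle_2}(Y)$, set $\theta:=\theta_1+\theta_2$, and consider the small-height set $Y(\theta)=\{x\in Y(\overline\qe):\hat h_{\elle_1\otimes\elle_2}(x)\le\theta\}$. The key observation is that if $x\in Y(\theta)$ satisfied both $\hat h_{\elle_1}(x)>\theta_1$ and $\hat h_{\elle_2}(x)>\theta_2$, then by additivity $\hat h_{\elle_1\otimes\elle_2}(x)>\theta_1+\theta_2=\theta$, a contradiction. Hence every $x\in Y(\theta)$ has $\hat h_{\elle_i}(x)\le\theta_i$ for some $i\in\{1,2\}$, giving the inclusion
$$
Y(\theta)\subseteq \{x\in Y:\hat h_{\elle_1}(x)\le\theta_1\}\cup \{x\in Y:\hat h_{\elle_2}(x)\le\theta_2\}.
$$

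Passing to Zariski closures and using the definition of $\mu_{\elle_i}(Y)$, the right-hand side is contained in $\overline{Y_1(\theta_1)}\cup\overline{Y_2(\theta_2)}$ with each piece a proper closed subset of $Y$. Since $Y$ is irreducible, this union is itself strictly contained in $Y$, so $\overline{Y(\theta)}\subsetneq Y$, whence $\mu_{\elle_1\otimes\elle_2}(Y)\ge\theta=\theta_1+\theta_2$. Letting $\theta_i\uparrow\mu_{\elle_i}(Y)$ gives the claimed inequality.

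I do not expect real obstacles here: the only subtle point is the correct use of non-negativity of $\hat h_{\elle_i}$ (which is why the contrapositive argument in the key observation is valid) and the irreducibility of $Y$, which is already assumed in the statement. The proof is essentially a one-line application of additivity of canonical heights combined with the defining property of the essential minimum.
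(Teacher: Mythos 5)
Your proof is correct and relies on the same two ingredients as the paper's: additivity of the Néron-Tate height under tensor product, and irreducibility of $Y$ to keep a finite union of proper closed subsets proper. The paper phrases it as a proof by contradiction (extracting a dense set of small points and shrinking it to force a contradictory height bound), but the underlying argument is the same as your direct version.
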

  
\begin{proof}
 This lemma is proved by contradiction, and it relies on the height relation $\hat h_{\elle_1 \otimes \elle_2} (x) = \hat h_{\elle_1 } (x)+\hat h_{\elle_2 } (x) $ for every $x\in A$.
   
 Suppose, by contradiction, that  
 $ \mu_{\elle_1 \otimes \elle_2}(Y) < \mu_{\elle_1 }(Y)+ \mu_{\elle_2 }(Y).$   Then there exist reals $k_1,k_2$ such that $0<k_i<\mu_{  \elle_i} (Y) $ and $\mu_{\elle_1 \otimes \elle_2}(Y)<k_1+k_2$, and a dense subset $U$ of $Y$ such that
 \begin{equation}
   \label{stella}  \hat h_{\elle_1 \otimes \elle_2}(x)\le k_1+ k_2 \quad \forall x\in U.
 \end{equation} 

From the definition of $\mu_{ \elle_i} (Y) $, the set of points of $Y$ such that $\hat h_{\elle_i }(x) \leq k_i$ is contained in a closed subset $V_i\subsetneq Y$. Since $U$ is dense, $U'=U\setminus{\cup_i V_i}$ is also dense in $Y$. In addition, for every $x\in U'$, $ \hat h_{ \elle_i}(x) > k_i $.  Then $$\hat h_{\elle_1 \otimes \elle_2} (x) =  \hat h_{ \elle_1} (x)+\hat h_{ \elle_1} (x) >k_1+k_2 \quad \forall x\in U'$$   which contradicts \eqref{stella}.
\end{proof}

\subsection{Theorem \ref{treapp} implies Theorem \ref{dueapp} }\label{1.3implica1.4}

The implication `Theorem \ref{treapp} implies Theorem \ref{dueapp}'  is straight forward from a result of Galateau  \cite{galateau}. For convenience we recall his theorem.   

For $Y$ a subvariety of a (semi)abelian variety, with an ample line bundle $\elle$, define
\[
\omega_\elle(Y)=\min_Z\{\deg_\elle Z\}
\]
where the minimum is taken over all the hypersurfaces (not necessarily irreducible) containing $Y$. 
\begin{thm}[\cite{galateau}, Theorem 1.1]\label{galateaucomm}
Let $B$ be an abelian variety  with a positive density of ordinary primes, and $\mathcal{L}$ be an ample and symmetric line bundle on $B$. Let $Y\subseteq B$ be a transverse variety.
Then 
\[
\mu_{\elle}(Y)\geq \frac{{C_0}(B,\elle)}{\omega_\elle(Y)}\left(\log\left(3\deg_\elle Y\right)\right)^{-\lambda(Y)}
\]
where ${C_0}(B,\elle)$ is a positive real depending on the variety $B$ and the line bundle $\elle$, and $\lambda(Y)=(5\dim B(1+\codim_B Y))^{1+\codim_B Y}$.
In particular, for every $\eta>0$, there exists a constant $C(B,\elle,\eta)$ such that
\begin{equation*}
\mu_{\elle}(Y)\geq \frac{C(B, \elle,\eta)}{(\deg_\elle Y)^{\frac{1}{\codim_B Y}+\eta}}.
\end{equation*}
\end{thm}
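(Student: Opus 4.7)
The plan is to adapt the Dobrowolski--David--Philippon--Amoroso auxiliary-polynomial strategy to the abelian setting, using Hypothesis H (positive density of ordinary primes) in place of the Frobenius action on $\mathbb{G}_m$. The proof is by contradiction and rests on three ingredients: construction of an auxiliary section, extrapolation at primes of ordinary reduction, and a zero estimate.

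Suppose for contradiction that $\mu_\elle(Y)$ is strictly smaller than the claimed lower bound. Then a Zariski-dense subset $\Sigma \subset Y(\overline{\qe})$ consists of points of height below $\mu_\elle(Y)$. Fix a large finite sample $\Sigma_0 \subset \Sigma$ and integer parameters $T$ (degree), $\sigma$ (multiplicity along $Y$), and a finite family $\mathcal{P}$ of primes of ordinary reduction, to be balanced later. Via an arithmetic Hilbert--Samuel estimate on $(B,\elle)$, one builds a nonzero global section $s \in H^0(B,\elle^{\otimes T})$ vanishing to order at least $\sigma$ along $Y$ at each point of $\Sigma_0$, with controlled logarithmic sup-norm. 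Because the multiplicity conditions are imposed along $Y$, their effective codimension is governed by the smallest degree of a hypersurface containing $Y$, namely $\omega_\elle(Y)$: this is precisely how $\omega_\elle(Y)$ enters the denominator.

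The extrapolation step exploits, for each prime $\mathfrak{p}\in\mathcal{P}$, the Serre--Tate canonical lift associated to the ordinary reduction $B_\mathfrak{p}$. This produces an isogeny $\Phi_\mathfrak{p}$ on $B$ lifting the $p$-Frobenius modulo $\mathfrak{p}$, and the small height of the base points together with an arithmetic Schwarz lemma (in the spirit of Bost's slopes method) transfers the vanishing of $s$ at $x$ to vanishing at $\Phi_\mathfrak{p}(x)$. Iterating over $\mathcal{P}$ enlarges the zero set substantially. A Philippon-type zero estimate then forces $Y$ to lie in a proper translate of an abelian subvariety of $B$, contradicting transversality unless the original lower bound on $\mu_\elle(Y)$ already held. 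A careful optimisation of $(T,\sigma,|\mathcal{P}|)$ produces the logarithmic factor with the stated exponent $\lambda(Y)$. For the ``in particular'' assertion, one uses that any transverse $Y$ of codimension $c$ is contained in a hypersurface of degree $\ll (\deg_\elle Y)^{1/c}$ (a standard Bezout--Chardin bound), so $\omega_\elle(Y) \ll (\deg_\elle Y)^{1/c}$; the $\log$ factor is then absorbed into an arbitrarily small power $(\deg_\elle Y)^\eta$.

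The main obstacle is the extrapolation: quantitatively controlling, at each ordinary $\mathfrak{p}$, the $\mathfrak{p}$-adic comparison between $\Phi_\mathfrak{p}(x)$ and $[p](x)$ uniformly over a set of primes of positive density, and converting this $\mathfrak{p}$-adic estimate into a usable global height inequality. Hypothesis H is exactly what enables this transfer, and the precise shape of $\lambda(Y)$ together with the factor $\omega_\elle(Y)$ are both dictated by the combinatorial parameter balance forced at this step.
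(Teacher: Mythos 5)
This result is Galateau's Theorem 1.1, which the paper cites and uses as a black box without reproducing its proof, so there is no internal argument to compare against. Your sketch of the Dobrowolski-style auxiliary-section / Frobenius-extrapolation-at-ordinary-primes / zero-estimate strategy is a faithful high-level account of Galateau's actual argument, and your derivation of the ``in particular'' bound --- using the Chardin--Philippon Bezout-type inequality $\omega_\elle(Y)\ll_{\dim B}(\deg_\elle Y)^{1/\codim_B Y}$, the fact that $\lambda(Y)$ is bounded in terms of $\dim B$ alone, and absorbing the logarithmic factor into $(\deg_\elle Y)^\eta$ --- is precisely the implicit step the paper relies on to pass from the displayed estimate to the stated corollary.
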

Note that if $A$ has a positive density of ordinary primes, then also all its abelian subvarieties have it.

 As mentioned in the introduction the hypothesis on a positive density of ordinary primes always holds for CM abelian varieties, powers of elliptic curves  and  abelian surfaces and  is conjectured to hold for every abelian variety.

\subsection{Essential minimum and isogenies}\label{isogenyproof}
%\begin{D} Let $a\in \ze$. 
%We say that an isogeny $\phi$ divides $a$ if there exists a complement  isogeny $\hat\phi$  such that $\phi\hat\phi=\efdual \ef=[a]$.
%\end{D}
In this section we  derive from the effective Bogomolov bound \eqref{galkind} a theorem which shows the good behaviour of the essential minimum under the action of an isogeny. 
 This result will be among the ingredients used, in Section \ref{sei}, to prove the main Theorem. 

\begin{thm}\label{thm1}
Let  $B$ be abelian variety such that an effective Bogomolov bound  \eqref{galkind} holds.
Let $Y$ be a transverse subvariety of $B$ and $\phi:B \to B$ an isogeny.
Then for every $\eta>0$ we have
  $$\mu_{\phi^*\elle}(Y)\gg \frac{\left({\deg_{\phi^*\elle}B}\right)^{\frac{1}{\codim_B Y} -\eta}}{\left({\deg_{\phi^*\elle}Y}\right)^{\frac{1}{\codim_B Y} +\eta}}.$$
\end{thm}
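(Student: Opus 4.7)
The plan is to apply the effective Bogomolov bound \eqref{galkind} to the image variety $\phi(Y)\subseteq B$ and then to translate via the isogeny identities for essential minima and degrees established earlier in this section.

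First I would verify that $\phi(Y)$ is transverse in $B$. If $\phi(Y)\subseteq H$ for a proper translate $H\subsetneq B$, then $Y\subseteq\phi^{-1}(H)$, and $\phi^{-1}(H)$ is itself a proper translate of $B$ since $\phi$ is a surjective isogeny. This contradicts the transversality of $Y$; in particular $\codim_B\phi(Y)=\codim_B Y=c$.

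Next I would apply \eqref{galkind} to $\phi(Y)\subseteq B$ with the fixed line bundle $\elle$, obtaining $\mu_\elle(\phi(Y))\gg\deg_\elle(\phi(Y))^{-(1/c+\eta)}$, with implied constant depending only on $B$, $\elle$ and $\eta$. Combining this with \eqref{phisaltamu}, which gives $\mu_{\phi^*\elle}(Y)=\mu_\elle(\phi(Y))$, and with Lemma \ref{gradino} together with the Projection Formula \eqref{projectionformula} (yielding $\deg_\elle(\phi(Y))=\deg_{\phi^*\elle}(Y)/|\stab Y\cap\ker\phi|$ and $\deg_{\phi^*\elle}(B)=|\ker\phi|\deg_\elle(B)$), I would re-express the inequality purely in terms of $\deg_{\phi^*\elle}(Y)$, $\deg_{\phi^*\elle}(B)$, and the stabilizer order.

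The hard part will be producing the exact power $(\deg_{\phi^*\elle}B)^{1/c-\eta}$ in the numerator. Direct substitution places $|\stab Y\cap\ker\phi|^{1/c+\eta}$ there, which differs from the target by a factor controlled by $|\ker\phi|/|\stab Y\cap\ker\phi|$. I expect to bridge this gap by combining the above with the complementary bound obtained by applying \eqref{galkind} directly to $Y$ and using the scaling identity $\mu_{[a]^*\elle}(Y)=a^2\mu_\elle(Y)$ from \eqref{gradino1iii}, for the integer $a$ produced by a dual isogeny $\hat\phi$ with $\hat\phi\phi=[a]$. Using Lemma \ref{ordinestab} to track stabilizers across the factorization $[a]=\hat\phi\phi$, I expect that a suitable balance of the two bounds produces the stated form, up to absorbing logarithmic and constant factors into a small perturbation of $\eta$.
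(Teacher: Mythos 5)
There is a genuine gap. Your direct substitution is correct: applying \eqref{galkind} to $\phi(Y)$, using \eqref{phisaltamu}, \eqref{projectionformula} and Lemma~\ref{gradino}, gives
$\mu_{\phi^*\elle}(Y)\gg\bigl(|\stab Y\cap\ker\phi|/\deg_{\phi^*\elle}Y\bigr)^{1/c+\eta}$,
whereas the target requires essentially $|\ker\phi|^{1/c-\eta}$ in the numerator (since $\deg_{\phi^*\elle}B=|\ker\phi|\deg_\elle B$). When $\stab Y$ is trivial the left-hand factor collapses to $1$, so the gap is of size $|\ker\phi|^{1/c-\eta}$, which is unbounded. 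Your proposed fix does not close it: applying \eqref{galkind} to $Y$ itself bounds $\mu_\elle(Y)$, and the scaling identity relates $\mu_{[a]^*\elle}(Y)$ to $\mu_\elle(Y)$ via $[a]^*\elle=\elle^{a^2}$, but $\phi^*\elle$ is \emph{not} a power of $\elle$ for a general isogeny $\phi$, so neither of these quantities is linked to $\mu_{\phi^*\elle}(Y)$. There is no ``suitable balance'' available between these two bounds.

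The missing idea is to work with a \emph{preimage}, not an image: take $W$ an irreducible component of $\hat\phi^{-1}(Y)$, which is again transverse of dimension $d$. Then $[a]W=\phi(Y)$, so $\mu_{\phi^*\elle}(Y)=\mu_\elle(\phi(Y))=|a|^2\mu_\elle(W)$: the factor $|a|^2$ is free. Applying \eqref{galkind} to $W$ and computing $\deg_\elle W=\frac{|\ker\hat\phi|}{|a|^{2d}}\deg_\elle\phi_*(Y)$ via \eqref{hinm}, Lemma~\ref{ordinestab}\ref{ordinestabii} and Lemma~\ref{gradino}, the factor $|\ker\hat\phi|$ appears. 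The identity $|\ker\hat\phi|\,|\ker\phi|=|a|^{2g}$ then converts this into the needed $|\ker\phi|$, with the exact exponent $1/c=1/(g-d)$ cancelling the $|a|^{2}$ against $|a|^{2(g-d)/(g-d)}$, and the small residual $|a|^{-2(g-d)\eta}$ is absorbed using $|a|\le\deg\phi$ at the cost of shrinking $\eta$. This is the step your outline needs; without passing to $W\subset\hat\phi^{-1}(Y)$ the degree bookkeeping cannot produce $(\deg_{\phi^*\elle}B)^{1/c-\eta}$.
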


\begin{proof}
 Let $g=\dim B$, and let  $a$ be an integer of minimal absolute value 
such that there exists an isogeny  $\efdual$  with $\ef \efdual =\efdual \ef=[a]$.  By definition of dual we have $|a| \leq \deg \phi$.

Let  $W$ be  an irreducible component of ${\efdual}^{-1}(Y).$
Note that isogenies preserve dimensions and transversality, so $\dim W=\dim Y=d$ and $W$ is transverse.

We have 
$$[a] W=\ef \efdual W= \ef \efdual{\efdual}^{-1}(Y) =\ef(Y).$$
Then \begin{equation}
\label{miniesi}\mu_{\ef^*\elle}(Y)=\mu_\elle(\phi(Y))=\mu_\elle([a]W)=|a|^\abv  \mu_\elle(W).
\end{equation}
We  now need to estimate $\deg_\elle W$.

Since $Y$ is transverse, $d>0$; then by formula \eqref{hinm}, 
$$\deg_\elle \ef (Y)=\deg_\elle [a]W=\frac{|a|^{\abv d}}{|\stab W \cap \ker[a]|}\deg_\elle W$$
or equivalently
$$\deg_\elle W=\frac{|\stab W \cap \ker[a]|}{|a|^{\abv d}}\deg_\elle \ef (Y).$$
 Using Lemma \ref{ordinestab} \ref{ordinestabii} and Lemma \ref{gradino}, we obtain
\begin{equation*}
\begin{split}
\deg_\elle W &=\frac{|\ker \efdual|}{|a|^{2d}}  |\stab Y \cap \ker \phi| \deg_\elle \ef (Y)\\  
&=\frac {|\ker \efdual| }{|a|^{2d}} \deg_\elle \ef_* (Y).
\end{split}
\end{equation*}

Since $W$ is transverse, applying bound \eqref{galkind} we have
\begin{align}
\mu_\elle(W) & \gg  \left({\deg_\elle W}\right)^{-\frac{1}{g-d}-\eta} \label{dacambiare}\\
                        &=  \left(\frac{{|a|^{\abv d}} } {|\ker \efdual| \deg_\elle \ef_* (Y)  } \right)^{{\frac{1}{g-d}+\eta}} \nonumber\\
                        &=  \left(\frac{{|a|^{\abv d}} \deg_\elle B} {|\ker \efdual| \deg_\elle \ef_* (Y)  } \right)^{{\frac{1}{g-d}+\eta}}(\deg_\elle B)^{\frac{-1}{g-d}-\eta}  \label{degAconst}\\
                       &\gg \left(\frac{{|a|^{\abv d}} \deg_\elle B} {|\ker \efdual| \deg_\elle \ef_* (Y)  } \right)^{{\frac{1}{g-d}+\eta}}(\deg_\elle B)^{-\eta}, \nonumber
\end{align}
where line \eqref{degAconst} follows absorbing the appropriate power of $\deg_\elle B$ in the implicit constant.

We  substitute this last estimate in \eqref{miniesi}, then
\begin{equation*}
\begin{split} \mu_{\ef^*\elle}(Y)&=|a|^\abv  \mu_\elle(W)\\&\gg |a|^\abv \left(\frac{{|a|^{\abv d}} \deg_\elle B} {|\ker \efdual| \deg_\elle \ef_* (Y)  } \right)^{{\frac{1}{g-d}+\eta}}(\deg_\elle B)^{-\eta}\\
&=\left(\frac{|a|^{\abv g-\abv d} |a|^{\abv d}|\ker \phi| \deg_\elle B} {|\ker \efdual| |\ker \phi|\deg_\elle \ef_* (Y)  } \right)^{\frac{1}{g-d}+\eta} (\deg_\elle B)^{-\eta}|a|^{-\abv  (g-d) \eta}\\
&=\left(\frac{|a|^{\abv g}\deg_{\ef^*\elle}B} {|a|^{\abv g}\deg_\elle \ef_* (Y)  } \right)^{\frac{1}{g-d}+\eta} (\deg_\elle B)^{-\eta}|a|^{-\abv  (g-d) \eta}
\\&=\left(\frac{\deg_{\ef^*\elle}B} {\deg_{\ef^*\elle}  Y  } \right)^{\frac{1}{g-d}+\eta}(\deg_\elle B)^{-\eta}|a|^{-\abv  (g-d) \eta},
\end{split}
\end{equation*}
where  $|\ker \efdual| |\ker \phi|=|a|^{2g}$ because $\phi\hat\phi=\hat\phi\phi=[a]$. In addition $$\deg_{\ef^*\elle}B=|\ker \phi|\deg_{\elle}B=\deg
\phi  \deg_\elle B .$$ 
 Since $|a| \leq \deg \phi$,  we have  $$(\deg_\elle B)^{-\eta}|a|^{-2 (g-d) \eta}\ge
(|a|\deg_\elle B)^{-2 (g-d)
  \eta}\ge(\deg_{\ef^*\elle}B)^{-  2(g-d) \eta}.$$
Then 
$$\mu_{\ef^*\elle}(Y)\gg \frac{\left(\deg_{\ef^*\elle}B\right)^{\frac{1}{g-d}-  2(g-d) \eta+\eta}} {\left(\deg_{\ef^*\elle}  (Y) \right)^{\frac{1}{g-d}+\eta } },
$$
which  easily implies the wished bound, after changing $\eta$.
\end{proof}

\subsection{Morphisms}\label{subsecmorphapp}
Let $A$ be a simple abelian variety of dimension $D$ and let $M\le N$ be positive integers.
 
We can associate  a matrix $\mathcal{A}=(\psi_{ij})_{ij} \in \mat_{M\times N}(\mathrm{End}(A))$ with a morphism
\begin{align*}
\psi_\mathcal{A}:A^N &\to A^M\\
(x_1,\dots, x_N) &\mapsto ({\psi_{11}}x_1+\cdots +{\psi_{1N}}x_N,\dots,{\psi_{M1}}x_1+\cdots +{\psi_{MN}}x_N).
\end{align*}

We also associate a morphism $\psi:A^N \to A^M$ with a matrix
\[
\mathcal{A}_\psi =
\left( {\begin{array}{ccc}
 \psi_{11} & \ldots & \psi_{1N}\\
 \vdots & \ddots &\vdots\\
 \psi_{M1} & \ldots & \psi_{MN}
 \end{array} } \right)
\]
such that all components of $\ker \psi$ are components of
\begin{equation}\label{sistema_app}
\begin{cases}{\psi_{11}}x_1+\cdots +{\psi_{1N}}x_N=0\\
\vdots\\
{\psi_{M1}}x_1+\cdots +{\psi_{MN}}x_N=0\end{cases}
\end{equation}
and such that the product of the norms of the rows of $\mathcal{A}_\psi$ is minimised.  As a norm for the rows we take the euclidean norm in $(\mathrm{End}(A)\otimes_{\mathbb{Z}}\mathbb{R})^N$.

For notation's sake, sometimes we  identify the morphism with the matrix.

\bigskip

We associate a morphism $\psi:A^N \to A^M$ with an abelian subvariety $A'$ of $A^N$ given by the connected  component of $\ker\psi$  passing through the zero of $A^N$.

Finally, we associate an abelian subvariety  $A'\subseteq A^N$  of codimension $MD$ with  the projection morphism $$\psi:A^N \to A^M=A^N/A'.$$ Then $A'=\ker\psi$ is a component of  the variety defined by the system \eqref{sistema_app}, where $\mathcal{A}_{\psi}=(\psi_{ij})_{ij}$ is the matrix associated with $\psi$, as above. 

 We  call the morphism (resp. the matrix) just defined the \emph{associated morphism} (resp. \emph{associated matrix}) of $A'$.

We finally define the norm of a morphism in the usual way.

\section{An equivalence of line bundles}\label{equivalencesection}
In this section we let $A$ be a simple abelian variety of dimension $D$.
\subsection{Definitions}
Let now $B$ be
an abelian subvariety of $A^N$ of dimension $nD$.
\begin{lem}\label{mw}

Let $B$ be an abelian subvariety of $A^N$ of dimension $nD$. Then there exists an isogeny $\phi: A^N\to A^N$ defined by a matrix
 \begin{equation*}
 \mathcal{A}_\phi={ \binom{\phi_B}{\phi_{B'}}}\end{equation*}
such that:
\begin{enumerate}[label=\roman{*}., ref=(\roman{*})]
\item\label{mwi} $ \phi_B\in\mat_{(N-n)\times N}(\mathrm{End}(A))$ and  $ \phi_{B'}\in\mat_{n\times N}(\mathrm{End}(A))$ are both of full rank;
\item\label{mwii} $ \phi(B)=\{0\}^{N-n}\times A^n$;
\item\label{mwiii} $\deg  \phi\le c(A,N)$, where $c(A,N)$ is a constant depending only on $A$ and $N$. 
\end{enumerate}
\end{lem}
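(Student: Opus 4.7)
The plan is to construct $\phi$ as a block-stacked morphism whose top block $\phi_B$ is the associated matrix of $B$ (in the sense of Section \ref{subsecmorphapp}), and whose bottom block $\phi_{B'}$ is the associated matrix of a complementary abelian subvariety $B'$ produced by Poincaré's complete reducibility theorem. Concretely, let $\phi_B \in \mat_{(N-n)\times N}(\mathrm{End}(A))$ be the associated matrix of $B$: it is of full rank, and its associated morphism $A^N \to A^{N-n}$ has $B$ as the identity component of its kernel (the dimensions match since $\dim B = nD$). By Poincaré reducibility applied to $B \subset A^N$, there is an abelian subvariety $B' \subseteq A^N$ of dimension $(N-n)D$ with $B + B' = A^N$ and $B \cap B'$ finite. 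Let $\phi_{B'} \in \mat_{n\times N}(\mathrm{End}(A))$ be the associated matrix of $B'$, so that its morphism $A^N \to A^n$ has $B'$ as the identity component of its kernel. Define $\phi$ as the morphism with matrix $\binom{\phi_B}{\phi_{B'}}: A^N \to A^{N-n} \times A^n = A^N$.

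The verifications of \ref{mwi} and \ref{mwii} are then short. Part \ref{mwi} holds by construction. For \ref{mwii}: since $B$ is contained in $\ker \phi_B$, we have $\phi_B(B) = 0$, so $\phi(B) \subseteq \{0\}^{N-n} \times A^n$. For the reverse inclusion, $\phi_{B'}$ is surjective (its kernel contains the $(N-n)D$-dimensional $B'$, so its image has dimension $nD = \dim A^n$), and using $B + B' = A^N$ together with $\phi_{B'}|_{B'} = 0$ yields $\phi_{B'}(B) = \phi_{B'}(A^N) = A^n$. Finally, $\phi$ is an isogeny because $\ker \phi = \ker \phi_B \cap \ker \phi_{B'}$ has identity component contained in the finite group $B \cap B'$, so the kernel is finite and $\phi$ maps between equidimensional abelian varieties.

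The delicate point is \ref{mwiii}, the uniform bound $\deg \phi \le c(A, N)$. Factoring $\phi$ through the projections gives $A^N \to (A^N/B) \times (A^N/B') \to A^{N-n} \times A^n$, whence $\deg \phi = |B \cap B'| \cdot d_1 \cdot d_2$, where $d_1, d_2$ are the degrees of the isogenies $A^N/B \to A^{N-n}$ and $A^N/B' \to A^n$ induced by $\phi_B, \phi_{B'}$. Each of the three factors must be bounded independently of $B$: the degrees $d_1, d_2$ are controlled by the minimisation of row norms in the definition of the associated matrix (Section \ref{subsecmorphapp}), which through a Minkowski-type reduction theory for $\mathrm{End}(A)$-lattices in $(\mathrm{End}(A)\otimes \mathbb{Q})^N$ forces the induced quotient map to be close to an isomorphism; and $|B \cap B'|$ is controlled by a judicious choice of complement inside Poincaré's theorem, invoking \cite[Lemma 7]{MWZeroEst}-type bounds on complementary abelian subvarieties in a power of a simple abelian variety. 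The main obstacle is precisely this uniformity: the bounds depend only on the arithmetic of the order $\mathrm{End}(A)$ in the division algebra $\mathrm{End}(A) \otimes \mathbb{Q}$ and on $N$, and must be shown to be genuinely independent of $B$.
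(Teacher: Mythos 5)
Your structure (stack the associated matrices of $B$ and of a Poincaré complement $B'$) matches the paper's, and your verifications of \ref{mwi} and \ref{mwii} are fine. But there is a genuine gap precisely at the point you yourself flag as "the main obstacle": the proof of \ref{mwiii} is not actually given, only described as something that "must be shown." Poincaré reducibility alone gives \emph{some} complement $B'$ with $B\cap B'$ finite, but provides no bound on $|B\cap B'|$ that is uniform in $B$; your appeal to a "judicious choice of complement" via \cite[Lemma 7]{MWZeroEst} is not the right tool, since that lemma is about the structure of abelian subvarieties of products of non-isogenous simple factors, not about controlling the cardinality of $B\cap B'$. The ingredient the paper actually uses here is a result of Bertrand (\cite[Proposition~2, p.~15]{bertrand}), which produces a complement $B'$ with $|B\cap B'|$ bounded by a constant depending only on $A$ and $N$; this is a nontrivial arithmetic input and cannot be replaced by a vague reduction-theory argument.

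The second half of the same gap is your claim that the row-norm minimisation in the definition of the associated matrix forces the degrees $d_1, d_2$ of the induced quotient maps to be bounded. This is again plausible but unproven, and the paper sidesteps it: it chooses $\phi_B$ with $\ker\phi_B = B + \tau$ and $\phi_{B'}$ with $\ker\phi_{B'} = B' + \tau'$, where $\tau \subset B'$ and $\tau' \subset B$ are torsion groups of cardinality bounded in terms of $N$ (citing Masser--Wüstholz \cite[Lemma~1.3]{Masserwustholz}), and then concludes directly that
$$|\ker\phi| = |(B+\tau)\cap(B'+\tau')| = |(B\cap B') + \tau + \tau'| \ll 1,$$
without ever factoring $\deg\phi$ as you do. So the overall skeleton is shared, but the essential step is missing from your argument: you would need to supply Bertrand's lemma (for $|B\cap B'|$) and the Masser--Wüstholz-type control of the torsion in the kernels, rather than gesture at reduction theory.
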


\begin{proof}
By a lemma of  Bertrand, (see \cite[Proposition~2, p.~15]{bertrand}) we can find a supplement $B'$ of $B$ such that $B+B'=A^N$ and  the cardinality  of $B\cap B' $ is bounded by a constant depending on $A$ and $N$. 

Then, there exists  a matrix  $$ \phi_B\in \mat_{{(N-n)}\times
  N}(\mathrm{End}(A))$$   of rank $N-n$ such that  $\ker  \phi_B= B+\tau$ for $\tau$ a torsion group contained in $B'$ of cardinality bounded by a constant depending only on $N$. 
Similarly, let  $ \phi_{B'}\in \mat_{{n}\times
  N}(\mathrm{End}(A))$   be a matrix of rank $n$ such that $\ker  \phi_{B'}= {B'}+\tau'$ for $\tau'$ a torsion group contained in $B$ of cardinality bounded only in terms of $N$ (see also Masser and W\"ustholz \cite[Lemma~1.3]{Masserwustholz}).

If we define the isogeny $\phi$ as follows:
 $$ \phi={ \binom{\phi_B}{\phi_{B'}}}: A^N\to A^N,$$
 then properties \ref{mwi} and \ref{mwii} are satisfied by construction, and we have that
\[|\ker  \phi|= |(B+\tau) \cap( B'+\tau')|=|(B\cap B')+\tau+\tau'|\ll 1.\qedhere\]
\end{proof}

It is an exercise in linear algebra to show that  there exists an isomorphism $T$ of norm bounded only in terms of $N$, such that all $n\times n$ minors of the matrix consisting of the last $n$ columns of $\left(\phi T\right)^{-1}$ have determinant different from zero.

 Clearly, for a point $P$, we have 
\begin{equation}\label{hupper}
\hat h(T(P))\ll ||T||^2 h(P).
\end{equation} Moreover, as in \cite[Proposition~4.2]{ioirmn} for elliptic curves and in \cite[Lemma~4.5]{ijnt} for abelian varieties, for any subvariety $X$ we get 
\begin{equation}\label{degupper}
\deg T(X)\ll ||T||^{2\dim X}\deg X.
\end{equation}
We deduce the following lemma.
\begin{lem}
 Let $T$, $P$ and $X$ be as above. Then
\begin{enumerate}[label=\roman{*}., ref=(\roman{*})]
\item\label{hb} $||T||^{-2(N-1)}\hat h(P)\ll \hat h(T(P)) \ll ||T||^2 \hat h(P)$
\item\label{degb} $||T||^{-2N} \deg X\ll \deg T (X) \ll  ||T||^{2\dim X}\deg X$. 
\end{enumerate}
\end{lem}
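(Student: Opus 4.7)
The strategy is to derive both lower bounds by applying the already-established upper bounds \eqref{hupper} and \eqref{degupper} to the inverse morphism $T^{-1}$. Since $T$ is an isomorphism of $A^N$, $T^{-1}$ is itself a morphism of abelian varieties, so applying \eqref{hupper} at the point $T(P)$ and \eqref{degupper} at the subvariety $T(X)$ yields
\[
\hat h(P) = \hat h(T^{-1}(T(P))) \ll \|T^{-1}\|^2\, \hat h(T(P))
\]
and
\[
\deg X = \deg T^{-1}(T(X)) \ll \|T^{-1}\|^{2\dim X}\, \deg T(X),
\]
which rearrange to the desired lower bounds for $\hat h(T(P))$ and $\deg T(X)$, expressed in terms of $\|T^{-1}\|$.

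The key technical step is then the norm estimate $\|T^{-1}\| \ll \|T\|^{N-1}$. This will be obtained from Cramer's rule applied to the matrix $\mathcal{A}_T \in \mat_{N\times N}(\mathrm{End}(A))$: the entries of $T^{-1}$ are (up to sign) the cofactors of $T$, i.e.\ $(N-1)\times(N-1)$ minors of $\mathcal{A}_T$, divided by $\det T$. Each such minor, being a sum of products of $N-1$ entries of $\mathcal{A}_T$, has euclidean norm $\ll \|T\|^{N-1}$. Since $T$ is an isomorphism, $\det T$ is a unit in $\mathrm{End}(A)$, and because $T$ was constructed with $\|T\|$ bounded only in terms of $N$, the inverse $(\det T)^{-1}$ has norm bounded in terms of $N$ as well. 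Altogether this gives $\|T^{-1}\| \ll \|T\|^{N-1}$.

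Substituting back yields $\hat h(T(P)) \gg \|T\|^{-2(N-1)}\hat h(P)$, which is exactly the lower bound of part \ref{hb}, and $\deg T(X) \gg \|T\|^{-2(N-1)\dim X}\deg X$. For part \ref{degb}, since $\dim X \leq N\dim A$ and $\|T\|$ is itself bounded in terms of $N$ alone, the factor $\|T\|^{-2(N-1)\dim X}$ is comparable to $\|T\|^{-2N}$ up to a quantity depending only on $N$ and $\dim A$, which is absorbed in the implicit constant of $\ll$.

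The main obstacle is the norm control $\|T^{-1}\|\ll \|T\|^{N-1}$. The cofactor estimates are routine linear algebra, but since $\mathrm{End}(A)$ need not be commutative, one must interpret $\det T$ via a reduced norm on $\mathrm{End}(A)\otimes_{\mathbb{Z}}\mathbb{R}$ and verify that, for the specific $T$ produced by the preceding construction, this reduced norm is not only nonzero but has controlled inverse. That control is precisely guaranteed by $\|T\|$ being bounded in terms of $N$.
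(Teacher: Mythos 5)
Your proof uses the same overall strategy as the paper — derive the lower bounds by applying the established upper bounds \eqref{hupper} and \eqref{degupper} to a composite with an "inverse" of $T$ — but the tactical choice differs in a meaningful way. The paper does not invert $T$ directly; instead it uses the complementary isogeny $\hat T$ with $\hat T T = [\deg T]$, applies \eqref{degupper} to get $\deg(\hat T T(X)) \le ||\hat T||^{2\dim T(X)}\deg T(X)$, identifies $\deg(\hat T T(X)) = \deg([\deg T]X)$, and then cites Hindry's Lemma 6 (formulas \eqref{hinm} and its companion) to relate $\deg([\deg T]X)$ back to $\deg X$. This route has two advantages over your $T^{-1}$ argument: $\hat T$ has integral entries by construction (it is essentially the adjugate), so there is no need to worry about whether $\det T$ is a unit and whether $(\det T)^{-1}$ has controlled norm — precisely the point you flag as the "main obstacle" in a non-commutative $\mathrm{End}(A)$; and passing through $[\deg T]$ and Hindry's Lemma keeps the degree bookkeeping in terms of quantities already handled elsewhere in the paper.

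There are also two points in your argument that deserve scrutiny. First, you derive $\deg T(X) \gg ||T||^{-2(N-1)\dim X}\deg X$ and then argue this is "comparable" to $||T||^{-2N}\deg X$ because $||T||$ is bounded in terms of $N$. While technically true, this step is somewhat self-defeating: if you invoke the boundedness of $||T||$, the precise exponents in the lemma become immaterial, and you might as well have absorbed the whole factor $||T^{-1}||^{2\dim X}$ into the implicit constant. The exponent $-2N$ in the statement does not drop out of your chain of inequalities; it drops out of the paper's Hindry-based accounting. Second, your claim that $(\det T)^{-1}$ has norm controlled in terms of $N$ is genuinely delicate — units in an order of a quaternion or CM algebra need not have bounded inverses a priori, and the reduced-norm argument needs to be pinned to the specific $T$ being used. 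The paper's choice of $\hat T$ neatly sidesteps this entire discussion, which is why the published proof takes that form.
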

\begin{proof}
The upper bounds are immediate from \eqref{hupper} and \eqref{degupper}, while the lower bounds are obtained applying  \eqref{hupper}  to  $\hat T T(P)$ and and \eqref{degupper} to  $\hat T T(X)$. 
In particular, using \eqref{degupper},  one has
\[\deg (\hat T T(X))\leq ||\hat T||^{2 \dim T(X)}\deg T(X).\]
On the other hand
\[\deg (\hat T T(X))=\deg([\deg T] X);\]
which, combined with \cite[Lemma 6]{Hin}, give the desired bound.
\end{proof}
Hence $T$ changes degrees and heights by a constant depending only on $N$ and $D$.

Note that the isogeny $\phi:A^N \to A^N$ sends $B$ to the last $n$ factors, $$\phi({B})=0\times \dots \times 0 \times A^n.$$ 
%Indeed $\phi(H)=\varphi(H)$, which has  by construction such a property, see lemma \ref{mw} \ref{mwii}.
We denote the immersion of $A^n$ in the last $n$ coordinates by  $$i: A^n \to A^N; \,\,\,(x_1,\dots,x_n)\mapsto (0,\dots,0,x_1,\dots,x_n).$$

\bigskip

Let us denote by $\defalf$ the minimal  positive integer  such that
there exists an isogeny $\hat\phi$  satisfying
$\phi\hat\phi=\hat\phi\phi=[\defalf]$.
 Note that, $|\defalf|\leq \deg \phi$.
Of course, we may take $\hat\phi$ to be the dual of $\phi$, but, eventually, for explicit computations, the above definition could be more convenient.

We decompose $\invf=(\mathcal{A}|\mathcal{B})$ with $\mathcal{A}\in \mat_{N \times (N-n)}(\mathrm{End}(A))$ and $\mathcal{B}\in \mat_{N\times n}(\mathrm{End}(A))$. We denote by $a_i$ the $i$-th row of $\mathcal{A}$; similarly
 $$\mathcal{B}=\left(\begin{array}{c}b_1\\
b_2\\
\vdots\\
b_N\end{array}\right)$$
and $(a_i,b_i)$ is the $i$-th row of $\hat\phi$.
\begin{lem}
\label{isogenie}
For $I \in \mathbb{I}=\{(i_1,\dots, i_n)\in \{1,\dots , N\}^{n}\text{ and } i_j<i_{j+1}\}$ the morphism 
 $$\varphi_I=\left(\begin{array}{c}b_{i_1}\\
\vdots\\
b_{i_n}\end{array}\right):A^n\to A^n$$ is an isogeny. 
\end{lem}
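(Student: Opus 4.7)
The plan is to translate the statement into a linear-algebra fact over the division algebra $D:=\mathrm{End}(A)\otimes_{\ze}\qe$ -- which is indeed a division algebra because $A$ is simple -- and then to pull the conclusion back to morphisms of abelian varieties.

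First, I would observe that $\varphi_I$ is represented by the $n\times n$ matrix $M_I$ obtained by keeping only the rows $i_1,\dots,i_n$ of $\mathcal{B}$. By the construction of the isomorphism $T$ in the previous subsection (after implicitly replacing $\phi$ by $\phi T$), every $n\times n$ submatrix of $\mathcal{B}$ has nonzero determinant, in the sense appropriate to the algebra $\mathrm{End}(A)\otimes\qe$: the ordinary determinant in the commutative cases, and the reduced norm (equivalently, the Dieudonn\'e determinant) in the quaternionic case. In particular $M_I$ is invertible as an element of $\mat_n(D)$.

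Next, let $N_I\in\mat_n(D)$ denote its inverse and let $k$ be a positive integer large enough that $kN_I$ has entries in $\mathrm{End}(A)$. The relation $N_I M_I=I_n$ in $\mat_n(D)$ then becomes
\[
(kN_I)\,M_I \;=\; [k]\cdot I_n \qquad\text{in }\mat_n(\mathrm{End}(A)).
\]
Reading this identity as a composition of morphisms $A^n\to A^n$, we obtain $\psi_I\circ\varphi_I=[k]$, where $\psi_I$ is the morphism attached to the matrix $kN_I$. Consequently $\ker\varphi_I\subseteq\ker[k]$ is finite; since $\varphi_I$ is a morphism between abelian varieties of common dimension $n\dim A$ with finite kernel, it is surjective and therefore an isogeny.

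The only delicate point is the non-commutative case: when $\mathrm{End}(A)\otimes\qe$ is a quaternion algebra, ``nonzero determinant'' must be read as nonvanishing of the reduced norm. This is standard, however, and in every case of the Albert classification for simple $A$ the invertibility of a square matrix over the division algebra $D$ is characterised by the nonvanishing of this invariant, so the argument proceeds uniformly.
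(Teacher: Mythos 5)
Your argument is correct and follows essentially the same route as the paper: both reduce the claim to the nonvanishing of the determinant of the $n\times n$ submatrix $M_I$ of $\mathcal{B}$, which is guaranteed by the choice of $T$ together with $\phi\hat\phi=[\alpha]$. The paper then simply asserts the conclusion (``$\varphi_I$ is a morphism associated with a square matrix of full rank''), whereas you justify that step explicitly by clearing denominators in $M_I^{-1}$ to produce a morphism $\psi_I$ with $\psi_I\circ\varphi_I=[k]$, deducing finiteness of $\ker\varphi_I$ and surjectivity by dimension count, and you also flag that ``determinant'' must be read as the Dieudonn\'e/reduced norm when $\mathrm{End}(A)\otimes\qe$ is a quaternion algebra. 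These are welcome elaborations of a step the paper treats as self-evident, but the underlying mechanism is the same.
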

\begin{proof}
We are assuming that the last $n$ columns of $\phi^{-1}$ have $n\times n$ minors with non-zero determinant. Together with $\phi \invf=\defalf \idi_N$,  this implies that all $n\times n$ minors  of $\mathcal{B}$ have non-zero determinant.  Then $\det \varphi_I\neq 0$. This is equivalent to say that $\varphi_I$ is an isogeny, as $\varphi_I$ is a morphism associated with a square matrix of full rank.
\end{proof}
We remark that the map $\varphi_I:A^n\to A^n$ may be described as the composition
\begin{align*}
 A^n \stackrel{i}\hookrightarrow A^N \stackrel{\hat\phi}{\longrightarrow} A^N \stackrel{\pi_I}{\longrightarrow} A^n,
\end{align*}
where $\pi_I$ is the projection on the $n$ coordinates appearing in the multi-index $I$.

Indeed, we can sum up the situation with the following commutative diagram:
\[
\begin{xy}
   %vertices,
%   (20,40)*+{A^{N-n}}="v7";
   (0,20)*+{A^N}="v1";(20,20)*+{A^N}="v2";(40,20)*+{A^N}="v3";%
   (0,0)*+{B}="v4";(20,0)*+{A^n}="v5";(40,0)*+{A^n}="v6";%
   %arrows
%   {\ar@{->}^{\phi_B} "v1";"v7"};{\ar@{->}^{\pi} "v2";"v7"};%
   {\ar@{->}@/^{1pc}/|{[\alpha]}    "v1";"v3"};
   {\ar@{->}_{\phi} "v1";"v2"};{\ar@{->}_{\hat\phi} "v2";"v3"};%
   {\ar@{^{(}->}^{j} "v4";"v1"};{\ar@{^{(}->}^{i} "v5";"v2"};{\ar@{->}^{\pi_I} "v3";"v6"};%
   {\ar@{->}|{\phi_{|B}} "v4";"v2"};{\ar@{->}|{\phi_I} "v2";"v6"};%
   {\ar@{->}_{\varphi_{B}} "v4";"v5"};{\ar@{->}_{\varphi_I} "v5";"v6"};%
\end{xy}
\]

where $j$ is the inclusion of $B$ in $A^N$, $\phi_I$ is defined as the composition $\pi_I\circ\hat\phi$, and $\varphi_B$ is the restriction to $B$ of the  morphism associated with the matrix $\phi_{B'}$ in Lemma \ref{mw}. The fact that $\phi(B)=\{0\}^{N-n}\times A^n$ allows us to say that all horizontal arrows are isogenies.

\bigskip

With this notation, we remark a consequence of Lemmas \ref{gradino} and \ref{mw}. For $\phi$ as above we have
\begin{equation}\label{push}\deg_\elle  {\varphi_B}_*({B})=\deg \varphi_B \deg_\elle A^n \leq \deg\phi\deg_\elle A^n \ll \deg_\elle A^n.\end{equation}
In fact $\deg \varphi_B = |B\cap \ker \phi|\leq |\ker\phi|$.

Note that by our definitions of degree in $A^N$ and $A^n$, the map $i$ preserves the degree of subvarieties of $\{0\}^{N-n}\times A^n$.

\subsection{The equivalence}\label{cinque}
In this section $A$ is a simple abelian variety. We recall that on $A$ we fixed a symmetric ample line bundle $\elle_1$; for any integer $m$, we denote by $\elle_m$ the bundle on $A^m$ obtained as the tensor product of the pullbacks of the natural projections of $A^m$ on its simple factors. We now study
$\mathcal{L}_{N| B}$ for $B$ an abelian subvariety of $A^N$. We can express a power of $\mathcal{L}_{N| B}$ as  a tensor
product of pull-backs via different morphisms of the bundle
$\mathcal{L}_n$ on $A^n$. Of course, even if $B$ is isogenous to $A^n$, in general $\mathcal{L}_{N| B}$ is not the natural bundle $\elle_n$.

\bigskip

\begin{thm}\label{relchiave}
   The following equivalence of line bundles holds
   $$ {\mathcal{L}_{{N}_{\,\,\,{\big{|{B}}}}}^{ \defalf^2 \moltp }}\cong \phi_{|B}^* \bigotimes_I \phi^*_I \elle_n\cong\varphi_{B}^* \bigotimes_I \varphi^*_I \elle_n ,$$
   where $\phi\hat\phi=\hat\phi \phi=[\defalf]$ is as  above and $\varphi_I$ is defined in Lemma \ref{isogenie}. 
\end{thm}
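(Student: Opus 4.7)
The plan is to prove the first equivalence on all of $A^N$ first, and then restrict to $B$; the second equivalence will then follow formally from the factorisation $\phi_{|B} = i\circ \varphi_B$ displayed in the commutative diagram.

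The central combinatorial identity is that, writing $p_k:A^N\to A$ and $q_k:A^n\to A$ for the natural projections, one has
$$\bigotimes_{I\in\Ii} \pi_I^*\,\elle_n\;\cong\;\elle_N^{\moltp}$$
on $A^N$. Indeed $\pi_I^*\elle_n = \bigotimes_{k=1}^n(q_k\circ\pi_I)^*\elle_1 = \bigotimes_{i\in I} p_i^*\elle_1$, and since each index $i\in\{1,\dotsc,N\}$ belongs to exactly $\binom{N-1}{n-1}$ of the $\binom{N}{n}$ multi-indices $I$, assembling the tensor product gives the above formula. This is the one place where the exponent $\moltp$ comes in.

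Next, pulling this back by $\hat\phi$ and using $\phi_I=\pi_I\circ\hat\phi$ yields
$$\bigotimes_{I\in\Ii}\phi_I^*\elle_n \;=\; \hat\phi^*\bigotimes_{I\in\Ii}\pi_I^*\elle_n \;\cong\; \hat\phi^*\elle_N^{\moltp}.$$
Pulling back once more by $\phi$ and using $\phi\hat\phi=[\defalf]$ together with Mumford's relation $[\defalf]^*\elle_N\cong \elle_N^{\defalf^2}$, one obtains on the whole of $A^N$ the identity
$$\phi^*\bigotimes_{I\in\Ii}\phi_I^*\elle_n \;\cong\; [\defalf]^*\elle_N^{\moltp} \;\cong\; \elle_N^{\defalf^2\moltp}.$$
Restricting both sides to $B$ gives the first stated equivalence.

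For the second equivalence, the commutative diagram tells us that $\phi_{|B}=i\circ\varphi_B$ and, by the description of $\varphi_I$ as $\pi_I\circ\hat\phi\circ i$, that $\phi_I\circ \phi_{|B}=\pi_I\circ\hat\phi\circ i\circ\varphi_B=\varphi_I\circ\varphi_B$. Hence, term by term in the tensor product,
$$\phi_{|B}^*\phi_I^*\elle_n \;=\;(\phi_I\circ\phi_{|B})^*\elle_n\;=\;(\varphi_I\circ\varphi_B)^*\elle_n\;=\;\varphi_B^*\varphi_I^*\elle_n,$$
and tensoring over $I\in\Ii$ yields $\phi_{|B}^*\bigotimes_I\phi_I^*\elle_n\cong\varphi_B^*\bigotimes_I\varphi_I^*\elle_n$.

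The only substantive step is the combinatorial computation giving the exponent $\moltp$; everything else is functoriality of pull-back together with the identity $[\defalf]^*\elle_N\cong\elle_N^{\defalf^2}$ and the commutativity of the diagram constructed in Lemma \ref{mw} and Lemma \ref{isogenie}. I do not expect any real obstacle, since the delicate geometric content (existence of $\phi$, the factorisation $\phi_{|B}=i\circ\varphi_B$, and the fact that the $\varphi_I$ are genuine isogenies) has already been established.
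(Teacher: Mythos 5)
Your proof is correct and follows essentially the same route as the paper's: the crux is the combinatorial fact that each index $i\in\{1,\dots,N\}$ appears in $\binom{N-1}{n-1}$ of the multi-indices $I$, combined with $\hat\phi\phi=[\defalf]$ and Mumford's relation $[\defalf]^*\elle_N\cong\elle_N^{\defalf^2}$. The only difference is presentational: you work directly with line-bundle isomorphisms and isolate the combinatorial identity $\bigotimes_I\pi_I^*\elle_n\cong\elle_N^{\moltp}$ on $A^N$ before pulling back through $\hat\phi$, whereas the paper carries out the same count at the level of first Chern classes, written row-by-row in terms of $(a_i,b_i)$.
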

\begin{proof}
We denote by $\chern(\elle)$ a representative of the first Chern-class of $\elle$. By $\piuc$ we mean the sum of cycles and by $\elle^m$ we mean  as before the tensor product of $m$ copies of $\elle$.
We recall that, for $f$ a morphism, we have 
\begin{equation}\label{chernmorfismo}
\chern(f^*\elle)=f^{*}\chern(\elle).
\end{equation}
   
Let $e_i:A^n \to A$ and $f_i:A^N \to A$ be the  projections on the $i$-th factor. Note that $e_i$'s (resp. the $f_i$'s) generate a free $\mathbb{Z}$-module of rank $n$ (resp. rank $N$) of $\Hom(A^n,A)$ (resp. $\Hom(A^n,A)$).
By definition of standard line bundle, 
\begin{align}
\label{chernoennepiccolo} \chern(\cbo)&=\piuc_{i=1}^n  e_i^*(\elle),\\
\nonumber\chern(\cnbundle)&=\piuc_{i=1}^N  f_i^*(\elle).
\end{align}
Note that, for any integer $\defalf$,  $(\defalf f_i)^* = (f_i[\defalf])^*=[\defalf]^{*} f_i^*$, and more in general, for $\psi\in \mat_{N\times m}(\mathrm{End}(A))$ and $\psi' \in \mat_{m'\times N}(\mathrm{End}(A))$ with $m,m'\in \mathbb{N}^*$, we have 
\begin{equation}\label{composizionepsi}
\psi^{*} (\psi')^*=(\psi'\psi)^*.
\end{equation}

In addition, by \cite[Corollary~3.6, p.~34]{l-b} for $\Mm$ a symmetric ample line bundle on $A^n$,  
\begin{equation*}
   \chern([\defalf]^*\Mm)=\chern(\Mm^{\defalf^2}),
\end{equation*}
and then, 
\begin{equation}\label{alpostodikercan}
  \chern(\mathcal{L}_N^{\defalf^2})=\piuc_{i=1}^N  (\defalf f_i)^*(\elle).
\end{equation}

Two line bundles are equivalent if and only if they have the same Chern-class;  the proof of the theorem is based on this remark.

      For any  morphism $\psi:A^N \to A^n$, applying \eqref{chernmorfismo}, \eqref{chernoennepiccolo} and \eqref{composizionepsi}, we have
      $$\chern(\psi^*\cbo)=\psi^{*} \chern(\cbo)=\piuc_{i=1}^n (e_i \psi)^*(\elle)=\piuc_{i=1}^n  \psi_i^*(\elle),$$ where $\psi_i: A^N \to A$ is  the $i$-th row of $\psi$.
   
   Apply this formula to each $\phi_I$. Then,  for $I=(i_1,\dots ,i_n)$,
   \begin{equation}\label{cherni}
      \chern(\phi_I^*\cbo )=  (a_{i_1},b_{i_1})^*(\elle) \piuc\dots \piuc (a_{i_n},b_{i_n})^*(\elle).
   \end{equation} 
   Since the Chern class of the tensor product is the sum of the Chern classes, we obtain
   \begin{multline}\label{chernl}
      \chern\left(\bigotimes_I \phi^*_I\cbo \right)= \boxplus_{I\in \mathbb{I}} \left( (a_{i_1},b_{i_1})^*(\elle)\piuc\dots \piuc (a_{i_n},b_{i_n})^*(\elle) \right) =\\
      =\binom{N-1}{n-1} \piuc_{i=1}^N  (a_i,b_i)^*(\elle),
   \end{multline}
   where the last equality  is justified from the fact that each multi-index $I$ consists of  $n$ coordinates and  each of the $N$ indices  appears the same number of times, so each row $(a_i,b_i)$ appears $\binom{N-1}{n-1}$ times, once for each possible  choice of the other $n-1$ rows.

   Recall that $\hat\phi\phi=[\defalf]$; by \eqref{chernmorfismo}  
   \begin{multline*}
      \chern\left(\phi^*\bigotimes_I \phi^*_I\cbo\right)=\phi^{*}\chern\left( \bigotimes_I \phi^*_I\cbo\right)=\\
      = \moltp \piuc _{i=1}^N ((a_i, b_i)  \phi)^*(\elle)
      =\moltp \piuc _{i=1}^N (\defalf f_i)^*(\elle).
   \end{multline*}

    Hence, using \eqref{alpostodikercan} and restricting to $B$ we conclude
    \begin{align*}
      \chern\left(\phi_{|B}^* \bigotimes_I \phi^*_I\cbo\right)&= \moltp \piuc _{i=1}^N {(\defalf f_i)^*(\elle)}_{|B} =\\
     =\moltp  \chern\left({\mathcal{L}^{\defalf^2}_N}_{|B}\right)&=\chern\left({\mathcal{L}_N^{ \defalf^2\moltp}}_{|B}\right).
    \end{align*}
The last isomorphism  in the statement of the theorem follows from $\phi_I\circ \phi_{|B}=\varphi_I \circ \varphi_B$.
\end{proof}

The following relation has been pointed out by Ga\"el R\'emond.
\begin{propo}\label{gael}
   The following equality holds
   $$\deg  \bigotimes_{I \in \mathbb{I}} \phi^*_I \cbo = \moltp^n \sum_{I \in \mathbb{I}} \deg  \phi_I^*\cbo.$$
\end{propo}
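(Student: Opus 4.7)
My plan is to reduce the identity to a combinatorial statement by expanding both sides as intersection numbers on $A^n$ (identified with $i(A^n)\subset A^N$ so that $\phi_I\circ i=\varphi_I$ is an isogeny by Lemma \ref{isogenie}). Set $\xi_k:=(f_k\hat\phi)^*\elle$ for $k=1,\dots,N$: the Chern class computation already performed in Theorem \ref{relchiave} gives $\chern(\phi_I^*\cbo)=\sum_{k\in I}\xi_k$ and, on summing, $\chern(\bigotimes_I\phi_I^*\cbo)=\moltp\sum_{k=1}^N\xi_k$, since each index $k$ belongs to exactly $\moltp$ subsets $I\in\mathbb{I}$. Because every $\xi_k$ is pulled back from $A$ of dimension $D$, the key vanishing $\xi_k^{D+1}=0$ holds in the Chow (and cohomology) ring of $A^n$.

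Next I would compute the top self-intersection $(\,\cdot\,)^{nD}\cdot[A^n]$ on each side. For each $I\in\mathbb{I}$, expanding $(\sum_{k\in I}\xi_k)^{nD}$ multinomially and invoking $\xi_k^{D+1}=0$ together with $|I|=n$ leaves only the single \emph{pure} monomial $\tfrac{(nD)!}{(D!)^n}\prod_{k\in I}\xi_k^D$; combining with the Projection Formula \eqref{projectionformula} and Lemma \ref{gradino} applied to the isogeny $\varphi_I$ yields the standard expression $\deg\phi_I^*\cbo = \deg\varphi_I\cdot\deg_{\cbo}A^n$. For the tensor product, expanding $(\moltp\sum_k\xi_k)^{nD}=\moltp^{nD}(\sum_k\xi_k)^{nD}$ produces exactly one pure monomial $\prod_{k\in I}\xi_k^D$ for each $I\in\mathbb{I}$, together with mixed terms indexed by exponent vectors $(m_1,\dots,m_N)$ with $\sum m_k=nD$ and $m_k\le D$ that are not concentrated on a single $n$-subset.

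The main obstacle is to show that the mixed terms reorganise so that the overall prefactor on the left-hand side is $\moltp^n$ rather than $\moltp^{nD}$. To handle this I would use the alternative identity $\chern(\bigotimes_I\phi_I^*\cbo)=\moltp\,\hat\phi^*\chern(\cnbundle)$ from Theorem \ref{relchiave} to rewrite the left-hand side as a pullback intersection number controlled by $\hat\phi$ on $A^N$, and then match the pure-plus-mixed decomposition against $\sum_I\deg\phi_I^*\cbo$ by a careful bookkeeping over the subsets of $\mathbb{I}$ and the degrees of the sub-isogenies $\varphi_J$ of Lemma \ref{isogenie}. The combinatorial identity underlying this bookkeeping -- relating $\moltp^{nD}$, the multinomial coefficient $(nD)!/(D!)^n$, and the count $\binom{N}{n}=|\mathbb{I}|$ with its weight $\moltp^n$ -- is where essentially all of the content of R\'emond's observation lies, and is the step on which I would spend the most effort.
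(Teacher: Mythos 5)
Your opening move matches the paper exactly: both proofs compute degrees as top self-intersections of Chern classes, and the formula $\chern(\phi_I^*\cbo)=\piuc_{k\in I}\xi_k$ with $\xi_k=(a_k,b_k)^*\elle$, together with $\xi_k^{D+1}=0$, immediately yields the pure monomial $\frac{(nD)!}{(D!)^n}\prod_{k\in I}\xi_k^D$ for each $I$. Where you diverge is instructive: the paper's own proof simply writes $\deg\phi_I^*\cbo=n!\deg\prod_{i_j\in I}\xi_{i_j}$ and $\deg\bigotimes_I\phi_I^*\cbo=n!\moltp^n\sum_I\deg\prod_{j}\xi_{i_j}$ and stops. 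The coefficient $n!$ (rather than $\frac{(nD)!}{(D!)^n}$), the coefficient $\moltp^n$ (rather than the $\moltp^{nD}$ one gets from $\chern(\bigotimes_I\phi_I^*\cbo)=\moltp\piuc_k\xi_k$), and the absence of mixed monomials are exactly correct when $D=\dim A=1$, since then $\xi_k^2=0$ and $nD=n$; for $D>1$ none of these is automatic, and the paper gives no justification for them.

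You have therefore put your finger on a real gap, but your proposal does not close it, and indeed cannot as currently envisaged. The difficulty is not mere ``bookkeeping'': once $D>1$, $1\le n<N$ and $\moltp>1$, the exponent mismatch $\moltp^{nD}$ versus $\moltp^n$ survives even if all mixed monomials were to vanish, and the mixed monomials do not vanish either. You can check on a toy case (simple abelian surface $A$ with $\mathrm{End}(A)=\mathbb{Z}$, $N=2$, $n=1$, so $\moltp=1$ and $\xi_k=\elle^{b_k^2}$) that the claimed identity reduces to $(b_1^2+b_2^2)^2=b_1^4+b_2^4$, which fails whenever $b_1b_2\neq 0$. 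So the plan of ``matching the pure-plus-mixed decomposition by a combinatorial identity relating $\moltp^{nD}$, $(nD)!/(D!)^n$ and $\moltp^n$'' has no such identity behind it. Either the statement should be read with $D=1$ in mind (consistent with the paper's applications to products of elliptic curves), or the correct replacement involves a larger power of $\moltp$ on the right (which still suffices for the use made in Theorem \ref{canoni}, where only the direction $\deg\bigotimes_I\phi_I^*\cbo\ll\sum_I\deg\phi_I^*\cbo$ with an absolute constant is needed). In short: right framework, genuine concern, but the promised resolution step is not there and, as stated, cannot be supplied.
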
 
\begin{proof}

 We  compute the degrees as intersection numbers. By relation
 \eqref{cherni}, we have
$$\deg \phi_I^* \cbo=n! \deg\prod_{i_j\in I} (a_{i_j}, b_{i_j})^* (\elle).$$
Similarly, by formula \eqref{chernl}, we obtain 
\begin{align*}
\deg {\bigotimes_I \phi_I^* \cbo}&=n! \moltp^n
\sum_{i_1<\dots <i_n}\deg \prod_{j=1}^n (a_{i_j}, b_{i_j})^* (\elle)=\\
&=\moltp^n\sum_I\deg \phi_I^*\cbo.\qedhere
\end{align*}
\end{proof}

\section{The Proof of the main theorem}\label{sei}
 Recall that
\[
A=A_1^{N_1}\times\dotsb \times A_r^{N_r},\]
where $A_i$ are pairwise non-isogenous simple abelian varieties. We fixed embeddings $A_i\rightarrow \mathbb{P}_{m_i}$ given by  symmetric ample line bundles on $A_i$. The bundles $\elle$ on $A$ and $\elle_{N_i}$ on $A_{i}^{N_i}$ are obtained as the tensor product of the pullbacks of the natural projections.

We first prove a weak form of Theorem \ref{treapp}, and then we remove the more restrictive hypothesis.
\begin{thm}
\label{canoni}  Theorem \ref{treapp} holds for $H$ an abelian subvariety of $A$. In particular, if $Y$ is a transverse subvariety of $H$, then for any 
$\eta>0$, there exists a positive   constant $c_1(A,\elle,\eta)$ such that 
$$\mu_\mathcal{L} (Y) \ge c_1(A,\elle,\eta)  \frac{
( \deg_\mathcal{L} H)^{\frac{1}{\codim_H Y}- \eta}
}{
(\deg_\mathcal{L} Y)^{\frac{1}{\codim_H Y}+ \eta}
}.$$
\end{thm}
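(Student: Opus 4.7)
The plan is to combine the line bundle equivalence of Theorem~\ref{relchiave} with the isogeny version of the Bogomolov bound in Theorem~\ref{thm1}. First I would reduce to the case where $A=A_0^N$ is a power of a simple abelian variety of dimension $D$ and $H=B\subseteq A_0^N$ is an abelian subvariety of dimension $nD$; the extension to the general product $A=\prod_i A_i^{N_i}$ then follows from the decomposition $H=\prod_i B_i$, the tensor decomposition of $\mathcal{L}$ along the factors, and Lemma~\ref{minessmin}.

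In the reduced setting, fix $\phi:A_0^N\to A_0^N$ given by Lemma~\ref{mw} together with a $\hat\phi$ satisfying $\hat\phi\phi=\phi\hat\phi=[\alpha]$, and let $\varphi_I:A_0^n\to A_0^n$ (for $I\in\mathbb{I}$) be the self-isogenies of Lemma~\ref{isogenie}, so that $\psi_I:=\varphi_I\circ\varphi_B:B\to A_0^n$ is itself an isogeny. By Theorem~\ref{relchiave},
\begin{equation*}
\mathcal{L}_N|_B^{\alpha^2\binom{N-1}{n-1}}\;\cong\;\bigotimes_{I\in\mathbb{I}}\psi_I^*\mathcal{L}_n;
\end{equation*}
iterating Lemma~\ref{minessmin}, together with \eqref{gradino1iii} and \eqref{phisaltamu}, gives
\begin{equation*}
\alpha^2\binom{N-1}{n-1}\,\mu_{\mathcal{L}_N}(Y)\;\ge\;\sum_{I\in\mathbb{I}}\mu_{\mathcal{L}_n}(\psi_I(Y)).
\end{equation*}
Since $\varphi_B(Y)$ is transverse in $A_0^n$ (isogenies preserve transversality), Theorem~\ref{thm1} applied to $A_0^n$, the transverse subvariety $\varphi_B(Y)$ and the self-isogeny $\varphi_I$ bounds each summand below by
\begin{equation*}
\mu_{\mathcal{L}_n}(\psi_I(Y))\;=\;\mu_{\varphi_I^*\mathcal{L}_n}(\varphi_B(Y))\;\gg\;\frac{(\deg_{\varphi_I^*\mathcal{L}_n}A_0^n)^{\frac{1}{\codim_H Y}-\eta}}{(\deg_{\varphi_I^*\mathcal{L}_n}\varphi_B(Y))^{\frac{1}{\codim_H Y}+\eta}}.
\end{equation*}

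The main obstacle is to turn this isogeny-indexed sum into a clean estimate involving $\deg_\mathcal{L} H$ and $\deg_\mathcal{L} Y$. Using the projection formula, Lemma~\ref{gradino}, and the triviality of the stabiliser of the transverse $\varphi_B(Y)$, the two degrees simplify (up to absorbable constants) to $u_I:=\deg\psi_I$ and $v_I:=\deg_{\mathcal{L}_n}\psi_I(Y)$. Setting $p=\frac{1}{\codim_H Y}-\eta\le 1$ and $q=\frac{1}{\codim_H Y}+\eta$, the elementary inequality
\begin{equation*}
\sum_I\frac{u_I^{p}}{v_I^{q}}\;\ge\;\frac{(\sum_I u_I)^{p}}{(\sum_I v_I)^{q}},
\end{equation*}
which follows from $v_I\le\sum_J v_J$ together with the subadditivity $\sum_I u_I^{p}\ge(\sum_I u_I)^{p}$ valid for $p\le 1$ on $[0,\infty)$, reduces the task to bounding $\sum_I u_I$ from below by $\deg_\mathcal{L} H$ and $\sum_I v_I$ from above by $\deg_\mathcal{L} Y$, each up to constants depending only on $A_0$ and $N$. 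These two comparisons follow from Proposition~\ref{gael} combined with the equivalence of Theorem~\ref{relchiave} and a Cauchy--Binet-type identification of $\sum_I\deg\varphi_I$ with a minor sum in the matrix $\mathcal{B}$ encoding $\hat\phi$, producing the stated bound.
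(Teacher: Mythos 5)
Your core strategy is the same as the paper's: pass through the equivalence of Theorem~\ref{relchiave}, split the essential minimum via Lemma~\ref{minessmin}, apply Theorem~\ref{thm1} to each summand, and recombine using Proposition~\ref{gael} together with the elementary inequality $\sum u_I^p/v_I^q\ge (\sum u_I)^p/(\sum v_I)^q$ for $p\le 1$. The final comparison of $\sum_I\deg_{\varphi_I^*\mathcal{L}_n}A^n$ with $\deg_\elle H$ (via Proposition~\ref{gael} and the pullback identity) and of $\sum_I\deg_{\varphi_I^*\mathcal{L}_n}\varphi_B(Y)$ with $\deg_\elle Y$ (via the projection formula, Lemma~\ref{gradino}, and Theorem~\ref{relchiave}) is precisely what the paper does; there is no need for a ``Cauchy--Binet'' identification beyond Proposition~\ref{gael} itself.

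There are, however, two points to flag. The first is genuine: the opening reduction to the case $A=A_0^N$ with $A_0$ simple, with the claim that ``the extension to the general product follows from $H=\prod_i B_i$, the tensor decomposition of $\mathcal{L}$, and Lemma~\ref{minessmin},'' does not work as stated. Lemma~\ref{minessmin} gives $\mu_\mathcal{L}(Y)\ge\sum_i\mu_{\elle_{N_i}}(\pi_i(Y))$, but the subvariety $Y$ is not a product, so the quantities $\codim_{H_i}\pi_i(Y)$ need not equal $\codim_H Y$ (one may even have $\pi_i(Y)=H_i$, in which case that term vanishes), and $\deg_{\elle_{N_i}}\pi_i(Y)$ bears no usable relation to $\deg_\mathcal{L} Y$. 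The paper instead works in the product from the start, defining $\Phi_H=\varphi_{H_1}\times\dots\times\varphi_{H_r}$ and $\Phi_{\mathbf I}=\varphi_{I_1}\times\dots\times\varphi_{I_r}$, and applies Theorem~\ref{thm1} to the product $A'=\prod_i A_i^{n_i}$ with bundle $\mathcal{M}=\bigotimes_i\pi_i^*\elle_{n_i}$; your simple-case computation lifts verbatim to this setting with multi-indices $\mathbf I=(I_1,\dots,I_r)$, so the fix is easy, but the reduction as you phrase it is not a reduction. The second point is minor: you invoke ``triviality of the stabiliser of the transverse $\varphi_B(Y)$,'' which is not guaranteed by transversality (a transverse variety may have nontrivial finite stabiliser). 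Fortunately you do not need it: keeping the term $|\stab\varphi_B(Y)\cap\ker\varphi_I|$ inside $V_I:=\deg_{\varphi_I^*\mathcal{L}_n}\varphi_B(Y)$, one still has $\sum_I V_I\ll\deg_{\otimes_I\varphi_I^*\mathcal{L}_n}\varphi_B(Y)\ll\deg_\elle Y$ because the stabiliser factor of $\varphi_B$ drops out in the favourable direction (as in the paper's estimate \eqref{formula3}).
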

\begin{proof}
Unless specified otherwise, in this proof we keep the same notation as in the previous section. Recall that \[H=H_1\times \ldots \times H_r\] where $H_i$ is an abelian subvariety of $A_i^{N_i}$, set $n_i=\frac{\dim H_i}{\dim A_i}$ and $d=\dim Y$. We set \[\Phi_H=\varphi_{H_1}\times\dotsb\times \varphi_{H_r}:H\to A',\]
 where $A'$ is the abelian variety $A_1^{n_1}\times \dotsb \times A_r^{n_r}$.

Denoting by \[\pi_i:A\to A_i^{N_i}\] the projection on $A_i^{N_i}$, we  have
\[\mathcal{L}=\bigotimes_{i=1}^r \pi_i^* \elle_{N_i}.\]

Recall that \[\mathbb{I}_i=\{(i_1,\ldots,i_{n_i})\in \{1,\ldots,N_i\}^{n_i}\};\] for every \[\Ii=(I_1,\ldots,I_r)\in\mathbb{I}_1\times\dotsb\times\mathbb{I}_r\] let \[\Phi_\Ii=\varphi_{I_1}\times\dotsb\times \varphi_{I_r}:A_1^{n_1}\times \dotsb \times A_r^{n_r}\to A_1^{n_1}\times \dotsb \times A_r^{n_r},\]
which is an isogeny by Lemma \ref{isogenie}.
 
We also define \[\alpha=\max_i \alpha_i^2 \binom{N_i-1}{n_i-1},\]
 where $\alpha_i$ is the minimal positive integer such that $[\alpha_i]=\phi_i\hat\phi_i=\hat\phi_i\phi_i$, with the notations and definitions of Section \ref{equivalencesection}.

Finally denote by $\mathcal{M}$ the bundle on $A_1^{n_1}\times \dotsb \times A_r^{n_r}$ given by \[\mathcal{M}=\bigotimes_{i=1}^r \pi_i^* \elle_{n_i}.\]

   By Lemma \ref{minessmin}, we know that 
   $$\mu_{\bigotimes_\Ii\Phi^*_\Ii \mathcal{M}}(\Phi_{H}({  {Y}})) \ge \sum_\Ii \mu_{\Phi^*_\Ii \mathcal{M}}(\Phi_{H}({  {Y}})).$$

   We apply Theorem \ref{thm1}  to each $\Phi_\Ii$ on $A'$ and $\mathcal{M}$. We deduce that for each $\Ii$, 
   \begin{equation*}
      \mu_{\Phi^*_\Ii \mathcal{M}}(\Phi_{H}({  {Y}})) \gg
      \frac{\left(\deg_ { \Phi^*_\Ii \mathcal{M}}A'\right)^{\frac{1}{\codim_H Y}-\eta}}{\left(\deg_ {\Phi^*_\Ii \mathcal{M}}\Phi_{H}({  {Y}})\right)^{\frac{1}{\codim_H Y}+\eta}}.
   \end{equation*}
   We obtain
   \begin{equation*}\begin{split}
      \mu_{\bigotimes_\Ii\Phi^*_\Ii \mathcal{M}}(\Phi_{H}({  {Y}})) &\geq\sum_\Ii
      \mu_{\Phi^*_\Ii \mathcal{M}}(\Phi_{H}({  {Y}}))\\
      &\gg   
      \sum_\Ii \frac{\left(\deg_ { \Phi^*_\Ii \mathcal{M}}A'\right)^{\frac{1}{\codim_H Y}-\eta}
      }{\left(\deg_ {\Phi^*_\Ii \mathcal{M}}\Phi_{H}({  {Y}})\right)^{\frac{1}{\codim_H Y}+\eta}}.
   \end{split}\end{equation*}
   Since each bundle is ample, for every variety $X$, we have   $\deg_ {\Phi^*_\Ii \mathcal{M}  }X \le \deg_ {\otimes_\Ii\Phi^*_\Ii \mathcal{M}}X $.

   Note also that, for $x_i\ge 1$, $(\sum x_i)^{1/m}\le \sum
    x_i^{1/m}$.

   Recall that, by definition, the degree of the abelian variety is the degree of the line bundle. Using then Proposition \ref{gael}, we deduce
   \begin{multline*}
      \sum_\Ii \frac{\left(\deg_ { \Phi^*_\Ii \mathcal{M}}A'\right)^{\frac{1}{\codim_H Y}-\eta}
      }{\left(\deg_ {\Phi^*_\Ii \mathcal{M}}\Phi_{H}({  {Y}})\right)^{\frac{1}{\codim_H Y}+\eta}}\ge  \frac{\left(\sum_\Ii \deg_ { \Phi^*_\Ii \mathcal{M}}A'\right)^{\frac{1}{\codim_H Y}-\eta}
      }{\left( \deg_ {\otimes_\Ii\Phi^*_\Ii
      \mathcal{M}}\Phi_{H}({  {Y}})\right)^{\frac{1}{\codim_H Y}+\eta}}\geq\\
      \ge {\left(\max_i\binom{N_i-1}{n_i-1}^{n_i}\right)}^{\frac{-1}{\codim_H Y}+\eta} \frac{\left(\deg_ { \bigotimes_\Ii\Phi^*_\Ii \mathcal{M}}A'\right)^{\frac{1}{\codim_H Y}-\eta}
      }{\left(\deg_ {\bigotimes_\Ii\Phi^*_\Ii \mathcal{M}}\Phi_{H}({  {Y}})\right)^{\frac{1}{\codim_H Y}+\eta}}.
   \end{multline*}
   Therefore
   \begin{equation*}\mu_{\bigotimes_\Ii\Phi^*_\Ii \mathcal{M}}(\Phi_{H}({  {Y}})) \gg 
      \frac{\left(\deg_ { \bigotimes_\Ii\Phi^*_\Ii \mathcal{M}}A'\right)^{\frac{1}{\codim_H Y}-\eta}
      }{\left(\deg_ {\bigotimes_\Ii\Phi^*_\Ii \mathcal{M}}\Phi_{H}({  {Y}})\right)^{\frac{1}{\codim_H Y}+\eta}}.
   \end{equation*}

   Note that with our notation $A'=\Phi_{H}({  {H}})$.\\
   Moreover, by  \eqref{push}, $\deg_ { \bigotimes_\Ii\Phi^*_\Ii \mathcal{M}} A' \gg \deg_ { \bigotimes_\Ii\Phi^*_\Ii \mathcal{M}} {\Phi_{H}}_*({  {H}})$.  We deduce 
   \begin{equation}\label{formula1}
      \mu_{\bigotimes_\Ii\Phi^*_\Ii \mathcal{M}}(\Phi_{H}({  {Y}})) \gg 
      \frac{\left(\deg_ { \bigotimes_\Ii\Phi^*_\Ii \mathcal{M}}{\Phi_{H}}_*({  {H}})\right)^{\frac{1}{\codim_H Y}-\eta}
      }{\left(\deg_ {\bigotimes_\Ii\Phi^*_I \mathcal{M}}\Phi_{H}({  {Y}})\right)^{\frac{1}{\codim_H Y}+\eta}}.
   \end{equation}

  By  \eqref{projectionformula} and Theorem \ref{relchiave} we obtain    
\begin{equation}\label{formula2}
      \deg_ {\bigotimes_\Ii \Phi^*_I \mathcal{M}}{\Phi_{H}}_*({  {H}}) =\deg_{\Phi_H^* \bigotimes_\Ii\Phi^*_I \mathcal{M}}{  {H}} \geq \deg_ {\elle}{  {H}}.
      \end{equation}
From Lemma \ref{gradino} and relation \eqref{projectionformula} we have
      \begin{equation}
      \deg_ {\bigotimes_\Ii \Phi^*_\Ii \mathcal{M}}\Phi_{|H}({  {Y}})=\frac{1}{|\ker \Phi_H\cap\stabv|}\deg_{\Phi_H^* \bigotimes_\Ii\Phi^*_I \mathcal{M}}{  {Y}}
\end{equation}
and from Theorem \ref{relchiave} and relation \eqref{gradino1ii} we get
\begin{equation}
\frac{1}{|\ker \Phi_H\cap\stabv|}\deg_{\Phi_H^* \bigotimes_\Ii\Phi^*_I \mathcal{M}}{  {Y}}\leq  \alpha^{\dim Y}\deg_ {\elle}{  {Y}}.
      \end{equation}
Thus
\begin{equation}\label{formula3}
\deg_ {\bigotimes_\Ii \Phi^*_\Ii \mathcal{M}}\Phi_{|H}({  {Y}})\leq \alpha^{\dim Y}\deg_ {\elle}{  {Y}}.
\end{equation}

Finally 
      \begin{equation}\label{formula4}
      \mu_{ \bigotimes_\Ii \Phi^*_\Ii \mathcal{M}}(\Phi_{H}({  {Y}}))=\mu_{\Phi_H^* \bigotimes_\Ii\Phi^*_I \mathcal{M} }({  {Y}})\leq \alpha \mu_\elle ({  {Y}}),
  \end{equation}
where the first equality comes from relation \eqref{phisaltamu}, while the second inequality by Theorem \ref{relchiave} and relation \eqref{gradino1iii}. 

 Notice that, by Lemma \ref{mw},  $\alpha \ll 1$. Plugging inequalities \eqref{formula2}, \eqref{formula3}, \eqref{formula4} into \eqref{formula1}, we deduce

   \[\mu_\elle ({  {Y}})\gg   
   \frac{(\deg_ {\elle}{  {H}})^{\frac{1}{\codim_H Y}-\eta}}{(\deg_ {\elle}{  {Y}})^{\frac{1}{\codim_H Y}+\eta}}.\qedhere\]
   \end{proof}

We can now weaken the hypothesis on $H$ and prove Theorem \ref{treapp} for $Y$ transverse in a translate of an abelian subvariety.

\begin{proof}[Proof of Theorem \ref{treapp}]
 We write the translate $H$ as $H=B+p$ , with $B$ an abelian subvariety.
Let $c_1$ be the constant in Theorem  \ref{canoni}; define 
   $$\theta=c_1  \frac{
   ( \deg_\elle B)^{\frac{1}{\codim_B Y}- \eta}
   }{
   (\deg_\elle Y)^{\frac{1}{\codim_B Y}+ \eta}
   }.$$

   If the set of points of $Y$ of height at most $\frac{1}{4}\theta$ is empty then $\mu_{\elle}(Y)\ge \frac{1}{4}\theta$.
   If not, choose a point $q \in Y$ such that $\hat h_{\elle}(q)\le \frac{1}{4}\theta$.
   We now translate by $-q$, so to have that $Y-q \subseteq B$. Translations preserve transversality and degrees, and so by  Theorem \ref{canoni} for $Y-q  $,
   $$\mu_{\elle}(Y-q  )\ge \theta.$$
   If $x\in Y$ and $\hat h_{\elle}(x)\le \frac{1}{4}\theta$, then $x -q  \in Y-q  $ and 
   $$\hat h_{\elle}(x-q  )\le 2\hat h_{\elle}(x)+2\hat h_{\elle}(q  )\le \theta\le\mu_{\elle}(Y-q  ). $$
   This shows that 
   \[\mu_{\elle}(Y)\ge  \frac{1}{4}\theta.\qedhere\]
\end{proof}

We finally remark that the constant in Theorem \ref{treapp} depends explicitly on the constant in bound \eqref{galkind}. In particular, the constant of Theorem \ref{dueapp} depends explicitly on the one given by Galateau.

\section*{Acknowledgements}
The authors are grateful to the referee for his valuable suggestions and comments.

%---------------------------------------------

 \bibliography{Biblio-SharpBogomolov3}
 \bibliographystyle{amsalpha} 
%---------------------------------------------
\end{document}